\numberwithin{equation}{section}
\newcounter{citedtheorems}
\newcounter{theoremcounter}
\newtheorem{defn}[theoremcounter]{Definition}
\newtheorem{theorem}[theoremcounter]{Theorem}
\newtheorem{disc}[theoremcounter]{Discussion}
\newtheorem*{theorem-m}{Theorem \ref{main-theorem}}
\newtheorem*{theorem-x}{Theorem}
\newtheorem*{theorem-abs1}{Theorem \ref{ind-theorem}}
\newtheorem*{theorem-abs2}{Theorem \ref{a23}}
\newtheorem*{theorem-abs3}{Theorem \ref{ind-new}}
\newtheorem*{theorem-abs4}{Theorem \ref{m1}}
\newtheorem{thm-lit}[citedtheorems]{Theorem}
\newtheorem{defn-lit}[citedtheorems]{Definition}
\newtheorem{fact-lit}[citedtheorems]{Fact}
\newtheorem{fact}[theoremcounter]{Fact}
\newtheorem{ntn}[theoremcounter]{Notation}
\newtheorem{cor}[theoremcounter]{Corollary}
\newtheorem{defn-claim}[theoremcounter]{Definition/Claim}
\newtheorem{rmk}[theoremcounter]{Remark}
\newtheorem{expl}[theoremcounter]{Example}
\newtheorem{qst}[theoremcounter]{Question}
\newcommand{\eps}{\epsilon}
\newcommand{\br}{\vspace{2mm}}
\newcommand{\mch}{\mathcal{H}}
\newcommand{\dom}{\operatorname{dom}}
\newcommand{{\xw}}{\mathbf{w}}
\newcommand{\mcx}{\mathcal{X}}
\newcommand{\de}{\mathcal{D}}
\newcommand{\vp}{\varphi}
\newcommand{\Ldim}{\operatorname{Ldim}}
\newcommand{\trv}{\mathbf{t}}
\newcommand{\mca}{\mathcal{A}}
\newcommand{\tlf}{\triangleleft}
\newcommand{\Ex}{\mathop{\mathbb{E}}}
\newcommand{\tcb}[1]{{\color{black}#1}}
\begin{document}

\title{The unstable formula theorem revisited via algorithms}

\author{M. Malliaris}
\address{Department of Mathematics, University of Chicago}
\curraddr{}
\email{mem@math.uchicago.edu}

\author{S. Moran}
\address{Departments of Mathematics, Computer Science, and Data and Decision Sciences, Technion and Google Research}
\curraddr{}
\email{smoran@technion.ac.il}

\begin{abstract} 

This paper is about the surprising interaction of a foundational result from model theory, about stability of theories, with algorithmic stability in learning.  
First, in response to gaps in existing learning models, we introduce a new statistical learning model, called ``Probably Eventually Correct'' or PEC. We characterize Littlestone (stable) classes in terms of this model. 
As a corollary, Littlestone classes have frequent short definitions in a natural statistical sense. In order to obtain a characterization of Littlestone classes in terms of frequent definitions, we build an equivalence theorem highlighting what is common to many existing approximation algorithms, and to the new PEC. This is guided by an analogy to definability of types in model theory, but has its own character. 
Drawing on these theorems and on other recent work, 
we present a complete algorithmic analogue of Shelah's celebrated Unstable Formula Theorem, 
with algorithmic properties taking the place of the infinite. 
\end{abstract}

\maketitle

\vspace{5mm}
\section{Introduction}

\vspace{3mm}

\medskip

The fundamental theorem of statistical learning theory 
says that a set system can be PAC learned if and only if it has finite VC dimension. 
A recent theorem of Alon, Bun, Livni, Malliaris, Moran \cite{ABLMM} proves that a set system can be differentially-privately PAC learned if and only if it 
is a Littlestone class, what model theorists would call stable. 

Although the past decade has led to a deepening and broadening of our understanding of the relation of model theory, specifically stability, and finite combinatorics, see e.g. the introduction to \cite{MiSh:E98}, algorithms are another matter entirely. 
One of the surprises (also to the authors) of \cite{almm} 
was that model theoretic and algorithmic ideas could interact in such an interesting way. 
A motivation for the present paper has been our emerging understanding that certain algorithmic properties of Littlestone classes that arise in learning theory reflect certain infinitary aspects of stability theory which neither combinatorics alone nor model theory seem to capture in the finite. 

\br

The title of the paper refers to Shelah's 1978 ``Unstable Formula Theorem.'' 
For context, across mathematics, we recognize the importance of theorems of equivalences: discoveries that many different phenomena must either all occur together or do not occur at all. Graph quasirandomness of Chung, Graham, Wilson and Thomason is one such 
example. A deep example from model theory, central to our work here, 
is Shelah's Unstable Formula Theorem from \cite{Sh:a}, stated in the Appendix below, which characterizes the stability, 
or equivalently the instability, of a formula in a theory. 

In more detail, certain aspects of Shelah's equivalence are combinatorial, involving existence of half-graphs and trees. 
These combinatorial aspects played a key role in 
\cite{ABLMM}, see \S \ref{s:motiv} below.  
These aspects have also been central to earlier work in model theory and combinatorics, starting with the stable regularity lemma of Malliaris-Shelah 
\cite{MiSh:978}.   But there is much more to the unstable formula theorem, which may seem a priori untranslatable (such as counting types
which 
only detects stability once the sets become infinite, in some sense  dense, see \S \ref{s:revisited}). 
In writing this paper, we have felt it is a   
challenge to our understanding to ask whether the full constellation of structural information it describes remains meaningful in the algorithmic world. 

To this end, we prove three theorems, each of which involves a different kind of work.
First we state and prove a new characterization of Littlestone classes in the language of learning 
by leveraging a model-theoretic sense of what is missing in 
existing models: 
this is 
Theorem \ref{t:stablepec}. As a corollary, we obtain that Littlestone classes have frequent short definitions, \ref{t:discussion}. Next, as motivated at the end of \S \ref{s:frequent1} and the beginning of \S \ref{s:frequent}, we lay out what we believe are the right statistical analogues of definability of types in stable classes in section 
\ref{s:frequent}, leading to Theorem \ref{t:equivalence} on approximations; a certain answer to `what is the 
probabilistic power of stability beyond that of finite VC dimension' is given there. Finally, we arrive to Theorem \ref{t:unstable}, 
the  ``Algorithmic'' Unstable Formula Theorem, which also draws on a wide range of recent work. 

Each of these are motivated and discussed at length in the sections below, 
and written to be accessible to mathematicians in logic, computer science and combinatorics.

%\newpage
\br

\setcounter{tocdepth}{1}

\tableofcontents

\br
\section{Background on Littlestone classes} \label{s:littlestone}

In this paper we will consider concept classes, also called hypotheses classes, or set systems. These are of the form 
$(X, \mch)$ where $X$ is a possibly infinite set and $\mch$ is a set of subsets of $X$. Elements of $\mch$ (or in general, subsets of $X$) 
are sometimes called hypotheses, and may be identified with their characteristic functions. 

Among these, the \emph{Littlestone classes} will play a central role. 
In combinatorial language, 
say that $(X, \mch)$ has Littlestone dimension $d < \omega$ %($d$ is minimal so that) there do not exist
if it is the largest natural number for which there exist
elements $\{ x_\eta : \eta \in {^{\leq d}{\{ 0, 1 \}}} \}$ from $X$ and
%\footnote{In this set-theoretic notation, $d$}
elements $\{ h_\rho : \rho \in {^{d+1}\{0, 1\}} \}$ from $\mch$ 
so that whenever $\eta^\smallfrown \langle 0 \rangle \trianglelefteq \rho$ we have $h_\rho(x_\eta) = 0$, and whenever 
$\eta^\smallfrown \langle 1 \rangle \trianglelefteq \rho$, $h_\rho(x_\eta) = 1$. 
\footnote{We can label the internal nodes of a full binary tree of height $d$ (but not $d+1$) with elements of $X$, and its leaves with elements of $\mch$, so that each $h_\rho$ has the following relation to each $a_\eta$ along its branch: if $\rho$ extends $\eta^\smallfrown \langle 0 \rangle$ then, $a_\eta \notin h_\rho$, and if $\rho$ extends $\eta^\smallfrown \langle 1 \rangle$, then $a_\eta \in h_\rho$.}
In the language of learning, a mistake tree of depth $t$ is a 
complete binary decision tree whose internal nodes are labeled by elements of $X$. 
Each walk forward from a root to a leaf can be described by a sequence 
of $t$ pairs $(x_i, y_i) \in X \times \{ 0, 1 \}$ where~$x_i$ is the label of the node at step~$i$ and $y_i$ is the direction (left or right) in which 
we take the next step. Such a sequence can be seen as a partial characteristic function. 
A mistake tree is said to be shattered by $\mch$ if each such partial function 
is extended by some $h \in \mch$. The Littlestone dimension, $\Ldim(\mch)$, is the (least) upper bound on the depth of a complete binary tree shattered by $\mch$.
Such trees are also called Littlestone trees. 
Littlestone classes are those with finite $\Ldim$.

Littlestone classes were studied by Littlestone \cite{littlestone} and by Ben-David, P\'{a}l, and Shalev-Shwartz \cite{bdps} who proved that 
$\Ldim$ characterizes \emph{online learnability} of the class.\footnote{Here ``online'' refers to the sequential presentation of the elements along the branch that the adversary follows in response to our moves (see for instance \cite{bdss} \S 21).}
It was observed by Chase and Freitag \cite{chase-freitag} that Littlestone dimension 
was the same as model-theoretic 2-rank, and they used this to give new examples of Littlestone classes.  Still, to our knowledge, the community did not place a lot of emphasis on  Littlestone classes, nor was it expected that online learning would have any connection to statistical learning.

The picture is now quite different following \cite{almm}, where it was established that in fact statistical learning and stability/Littlestone are deeply connected, as we now explain.

\section{Background on P.A.C. learning and Differential Privacy} \label{s:pac}

We begin with a quick introduction of the probably approximately correct, or PAC, learning model.
This model aims to capture tasks in which a learning algorithm is given a sequence of training examples
$(x_1,c(x_1)),\ldots, (x_n,c(x_n))$, labeled by some unknown target concept $c:X\to\{0,1\}$ and an (arbitrary) test point $x$,
and its goal is to correctly predict the label $c(x)$. That is, after seeing the training set $(x_i,y_i)_{i=1}^n$
the learner outputs a hypothesis $h:X\to \{0,1\}$ which should be ``\emph{similar}'' to the target concept $c$.
In the PAC model it is assumed that the $x_i$'s and the test point $x$ are sampled i.i.d.\ from a distribution $D$ over $X$ 
which is unknown to the learner,
and that the target concept $c$ belongs to a concept class $\mch$, which is known to the learner (the class $\mch$, not the concept $c$).

We now set up the model in a more precise way, and introduce some basic notation that will be used throughout the paper.
Let $X$ be a set called the domain. 
In what follows, since we will need measurability, we can either assume $X$ is countable, or use standard assumptions which are stated explicitly in Remark~\ref{r:measure} below. 
A hypothesis or concept is an ``$X\to\{0,1\}$'' function.
A concept/hypothesis class~$\mch$ is a set of $X\to\{0,1\}$ functions.
When $D$ is a distribution over $X\times\{0,1\}$ and $h$ a hypothesis,
the loss (sometimes called the population loss) of $h$ with respect to $D$ is defined by 
\begin{equation} \label{e:loss} 
L_D(h)=\Pr_{(x,y)\sim D}[h(x)\neq y]. 
\end{equation}
The distribution $D$ is said to be realizable by $\mch$ if 
\begin{equation} 
\inf_{h\in\mch }L_D(h)=0.
\end{equation}
For a set $Z$, let $Z^\star = \cup_{n=0}^\infty Z^n$. A learning rule $\mca$ is a mapping that takes as an input a finite sequence of examples and outputs a hypothesis. That is, it is function $\mca:(X\times\{0,1\})^\star \to \{0,1\}^{X}$.
Given a distribution $D$ over $X\times\{0,1\}$, the \emph{learning curve} of $\mca$ with respect to $D$ is the sequence
\[ n\mapsto \Ex_{S\sim D^n}[L_D(h_n)],\]
where $h_n= A(S)$ is the hypothesis outputted by the algorithm on the input sample~$S$.
That is, the learning curve measures the expected performance of the algorithm as a function of the number of examples it is trained on.

$\mch$ is said to be PAC learnable if there is a vanishing sequence $\alpha(n)\to_{n\to\infty} 0$ and a learning rule $\mca$
such that for every realizable distribution $D$:
\begin{equation} 
\Ex_{S\sim D^n}[L_D(h_n)]\leq \alpha(n),
\end{equation}
where $S$ is the input sample $S=\{(x_i,y_i)\}_{i=1}^n$ and $h_n= \mca(S)$ is the output hypothesis of the learning rule $\mca$.
In other words, all realizable learning curves of $\mca$ converge uniformly to $0$.\footnote{In Section~\ref{s:pec} we discuss variants of this definition where we relax the uniform requirement.}

A couple of remarks for readers who may have seen other (equivalent) definitions of PAC learnability: 
the above definition asserts that the algorithm $\mca$ ``probably'', ``approximately learns'' $\mch$
in the sense that in expectation it achieves a small loss given enough examples.
An equivalent way of formulating it requires a high probability bound on the event of achieving a small loss 
(rather than taking expectation as above). In the second formulation there is an additional parameter $\beta$
quantifying this confidence. We chose the first formulation because it uses the learning curve perspective which will be useful in later sections.

To summarize informally, a concept class is PAC-learnable if there exists a learning algorithm which with high probability (``probably'') outputs a concept which is close to the correct one (``approximately correct''). 
The fundamental theorem of statistical learning theory says that a set system is PAC learnable if and only if it has 
finite VC dimension.  See for instance \cite{bdss} \S 6.4-6.5. 

\medskip

We now arrive to a central idea for the paper.

Differentially-private PAC learning asks, informally, for PAC learnability 
via an algorithm which is robust in the sense that small changes in the input do not disturb the output.   There are 
obvious practical uses of such an algorithm in being able to guarantee privacy or anonymity to individual 
participants in data sets. It was introduced by  
Dwork, McSherry, Nissim and Smith in \cite{dmns}, for which they won the G\"odel prize. 

Formally stating the definition of privacy requires several steps. 
In generality, every algorithm induces a mapping taking inputs to outputs.
When the algorithm is randomized, we can think about it as a (deterministic)
mapping from inputs to distributions\footnote{In more detail, a randomized algorithm is modeled as a function $f:I\times R\to O$, where $I$ is the input space of the algorithm, $O$ is the output space of the algorithm, and $R$ is the set of random seeds. This means that there is a distribution over $R$ called the ``randomness'' of the algorithm, and given a point $i$ the output of the algorithm is computed by sampling $r$ from $R$ and outputting $f(i,r)\in O$. Thus, every such algorithm induces a deterministic mapping from $I$ to distributions over $O$, by assigning to each $i\in I$ the push-forward measure of the randomness on $R$ induced by~$f(i,\cdot)$.} over outputs 
(the output is fully determined once the internal randomness of the algorithm is instantiated).
Thus, below we interpret a randomized learning algorithm as a deterministic mapping
from input samples to distributions over hypotheses.  
This interpretation
is used to define privacy. 
Say that two probability distributions $p, q$ are indistinguishable, so-called $(\epsilon, \delta)$-indistinguishable, when for  
every event $E$, $p(E) \approx_{\epsilon, \delta} q(E)$ where $a \approx_{\epsilon, \delta} b$ means that 
$a \leq e^\epsilon b + \delta$ and $b \leq e^\epsilon a + \delta$. 
Note that for small $\epsilon,\delta$, the affine map $x\mapsto e^\epsilon x + \delta$ is close to the identity map~$x\mapsto x$. Thus,  $a\approx_{\eps,\delta} b$ roughly means that there is an approximate identity which maps $a$ to $b$ and vice versa.
For more on this notion of similarity, see Vadhan \cite{vadhan} 7.1.6.

Given $\epsilon$ and $\delta$, we say that a learning algorithm $\mathcal{A}$ is $(n,\eps,\delta)$-differentially private if whenever $\mca$ is presented with two sequences of the domain of a given length~$n$ which differ in one place, then the distributions it outputs in each case are $\approx_{\epsilon,\delta}$ indistinguishable. Here $\eps$ is a constant (independent of input-sample size) whereas $\delta$ should be negligible\footnote{I.e.\ if $n$ denotes the sample size then $\lim_{n\to\infty} \delta(n)\cdot P(n)=0$ for every polynomial $P$.} compared to the input size. 
So-called pure differential privacy is the special case where $\delta = 0$. We focus on the more general case, approximate differential privacy (or just: differential privacy) which allows $\delta$ to be nonzero. 

More precisely, a concept class is said to be differentially-privately PAC-learnable if there exists a learning rule $\mca$, a vanishing sequence $\alpha(n)\to 0$, and a negligible sequence $\delta(n)\to 0$, such that $\mca$ is $(n,\eps,\delta(n))$-differentially private with $\eps\leq 1$, and also $\mca$ PAC learns $\mch$: namely for every realizable distribution $D$ and every $n$, $\Ex[L_D(h_n)\leq \alpha(n)$, where $h_n=A(S)$, $S$ is the input sample, and the expectation is taken over $S\sim D^n$ as well as the randomness of $\mca$.\footnote{Notice that we bound the privacy parameter $\eps$ by $1$ in this definition. This choice is arbitrary: replacing $\eps$ with any other positive real number yields an equivalent definition, see~\cite{vadhan} for more details.}
Observe that there are four parameters implicit in this definition: the sample complexity $n$, the accuracy parameter $\alpha$, and the privacy parameters $\epsilon, \delta$ related to $\approx_{\epsilon, \delta}$.

\vspace{5mm}

\section{Motivation for our new learning model}\label{s:motiv}

At this point let us state the characterization theorem more precisely following~\cite{ABLMM}.  The first direction is due to 
Alon, Livni, Malliaris, Moran in \cite{almm} and the second is due to Bun, Livni, Moran in \cite{BLM}.  There have since been 
quantitative improvements to the bounds in the second direction~\cite{GGKM21}.

%\newpage

\begin{thm-lit} Private learning and finite Littlestone dimension are equivalent: 
\begin{enumerate}
\item \emph{(Any privately learnable class must have finite Littlestone dimension.)} 
\\
%\textcolor{red}
{Suppose there that for some $n\in\mathbb{N}$, $\eps>0, \alpha < \frac{1}{2}$, and $\beta<1$ there is a differentially private learning rule for $\mch$ with sample complexity $n$, privacy parameters $\eps$, $\delta(n)=O\bigl(\frac{1}{n^2\log n}\bigr)$, and accuracy and confidence parameters $\alpha$ and $\beta$.
Then $\mch$ is a Littlestone class, i.e.\ its
$\Ldim$ %Littlestone dimension
is finite.}

\item \emph{(Any class with finite Littlestone dimension is privately learnable)} 
\\ Suppose $\mch$ has Littlestone dimension $d$, 
let $\epsilon, \delta \in (0, 1)$ be privacy parameters, and let $\alpha, \beta \in (0, \frac{1}{2})$ be accuracy parameters. For 
$n = O_d(\frac{\log(1/\beta \delta)}{\alpha \epsilon})$, there exists an $(\epsilon, \delta)$-differentially private algorithm such that for 
every realizable distribution $D$ and input sample $S \sim D^n$, the hypothesis $f = \mathcal{A}(S)$ satisfies 
$\operatorname{loss}_D(f) \leq \alpha$ with probability at least $1-\beta$, where the probability is taken over $S \sim D^n$ as well as the internal randomness of $\mca$. 
\end{enumerate}
\end{thm-lit}

Several points about the existing proofs of this theorem may reasonably be understood as challenges to our understanding. 
The proof of (2) proceeds by proving something of independent interest: that any class with finite Littlestone dimension 
can be learned by a so-called \emph{globally stable} algorithm, and indeed this is characteristic of Littlestone classes, 
where: 

\begin{defn}[\cite{BLM}] \label{d:frequent} 
An algorithm $\mca$ is called $(n, \eta)$-globally stable with respect to a distribution $D$ if there exists a hypothesis $h$ such that 
\[ \operatorname{Pr}_{S \sim D^n} [A(S) = h] \geq \eta \]
i.e. hypothesis $h$ is outputted with frequency at least $\eta$. 
\end{defn}

\begin{thm-lit}[\cite{BLM}] \label{t:global}
A class $(X, \mch)$ is Littlestone if and only if it is learnable by a globally-stable algorithm %\textcolor{red}
{in the following sense:
there exists an $\eta >0$ such that for every $\alpha,\beta > 0$ there is an $n$ and an $(n,\eta)$-globally stable rule
whose accuracy and confidence parameters are $\alpha,\beta$.}
\end{thm-lit}

Note that the hypothesis of PAC-learnable alone does not guarantee any repetition in output hypotheses: the output must often be close to the target concept, but a priori, it could be different every time. 

Moreover, the frequent hypothesis $h$ of Definition \ref{d:frequent} is still subject to the error allowance of PAC learning: there is  
no guarantee that $h$ is actually correct, only that the loss is less than our nonzero $\alpha$ given in advance.  Although 
$\eta$ in Definition~\ref{d:frequent} is nonzero, it is typically small, for instance in \cite{ABLMM} $n = 2^{O(d)}$ and 
$\eta = 2^{-2^{O(d)}}$. 

Given all we know about Littlestone (stable) classes, this begs the question of whether we may improve along \emph{both} these axes: 

\begin{qst}
Can Littlestone classes be learned by a learner which eventually almost always outputs the same frequent hypothesis (i.e.\ the global stability parameter is close to $1$)? Notice that if this happens then the frequent hypothesis must have error $0$, because it is frequently outputted for arbitrarily large sample sizes.
\end{qst}

The next section answers these questions and to do so introduces a new learning model which we call Probably Eventually Correct learning. 
Below we will start with a more precise introduction and then point out where the subtleties are in the definitions and proofs. 

%\newpage 

\vspace{5mm}

%\section{Probably eventually correct = PEC learning} 
\section{A new statistical learning model: Probably Eventually Correct}
\label{s:pec}

In this section we define and prove a characterization of Littlestone classes via a new statistical learning model which we call 
PEC or probably eventually correct learning. In light of the above discussion,  
the gap filled by this new theorem can be motivated in several ways.

\begin{itemize}
\item Previous work on global stability had characterized Littlestone classes as being PAC-learnable by an algorithm which outputs some fixed hypothesis, close to the target concept, with nonzero probability.  
However the frequency of this hypothesis, though nonzero, was typically small.

\item Moreover, its loss, though small, was a priori nonzero.

\item Although Littlestone classes carry a natural ``canonical algorithm,'' the so-called standard optimal algorithm or SOA (\S \ref{d:soa})
and although various known algorithmic characterizations of Littlestone classes in the statistical setting 
(such as differentially private learning, globally stable learning, and other characterizations, some of which are presented in the next section) 
use the SOA as a key sub routine in the corresponding algorithms, the SOA by itself does not satisfy the necessary definitions (e.g.\ the SOA is not differentially private, but it is used as a sub routine in differentially private algorithms for Littlestone classes). 
\end{itemize}

The PEC model we propose requires that almost surely the algorithm eventually outputs hypotheses which are correct on the distribution. 
(Hence the name \emph{Probably Eventually Correct}.)
Informally, imagine we sequentially feed a learning algorithm with training examples drawn from the population,
and let $k$ be the number of times such an algorithm 
revises or changes its mind before stabilizing (see Definition \ref{d:mindchanges} below). In the section's main theorem, \ref{t:stablepec}, we prove that a class is Littlestone with dimension $d$ if and only if it can be PEC-learned by an algorithm with $k \leq d$. We call this ``stable PEC learning'', because the algorithm stabilizes to an absolutely correct (up to measure zero) hypothesis after at most $k$ mind changes. Here $k$ does not depend on the distribution although, unlike global stability, the sample size is not bounded, that is, the last mind change can occur potentially after seeing very many examples. 
We will see below that any Littlestone class is naturally stably  PEC-learned by its ``canonical'' learning rule, namely the SOA;
the converse direction, that any stably PEC learnable class is Littlestone, is less obvious; this is also reflected by its proof which is more subtle.

Theorem \ref{t:stablepec} responds to the three points above as follows.  
Let $\mca$ be a stable PEC learner for a 
Littlestone class $\mch$ with $\Ldim(\mch) = d$. 
We shall evaluate the performance of $\mca$ by 
choosing a countably infinite sequence $\{ (x_n,y_n) \}$ sampled i.i.d.\ from $X\times\{0,1\}$ according to an arbitrary but 
fixed realizable distribution $D$,  
writing $h_n$ for the output of $\mca$ 
on $\{ (x_1,y_1), \dots, (x_n,y_n) \}$.  We will see that:  

\begin{itemize}
\item The frequency of the output hypothesis is $1$ (up to measure zero). \footnote{More precisely, 
for any realizable distribution $D$ over $X$, there is $h = h(D)$ such that with probability $1$, 
given any countably infinite sequence sampled i.i.d.\ from $X$ according to $D$, the $h_n$'s are eventually equal to $h$, up to measure zero, meaning $\Pr[h(x)\neq h_n(x)]=0$.} 

\item The loss of the output hypothesis is eventually zero. (See \S \ref{s:pac}.) %Definition~\ref{def:loss}.) 

\item The SOA is a stable PEC learner for $\mch$, i.e.\ it stabilizes on the eventual hypothesis after at most $d$ mind changes.
\end{itemize}

There are other advantages which we will discuss later in the presence of other models, 
such as what might be called the 
frequent definitions phenomenon. 

To begin we propose two definitions. 

\begin{defn} \label{d:pec-learnable} We say that $\mch\subseteq\{0,1\}^\mcx$ is \emph{Probably Eventually Correct (PEC) Learnable}
	if there exists a learning rule $\mca$ such that for every distribution $D$ which is realizable by $\mch$
	(i.e.\ $\inf_{h\in \mch}L_D(h)=0$) it holds that
	\begin{equation}\label{e:pec}
	\Pr\Bigl[(\exists N)(\forall n\geq N):L_D(h_n)=0\Bigr] = 1,
	\end{equation}
	where the probability is taken over sampling an infinite IID sequence of examples $S=\{z_n\}$ from $D$,
	and $h_n$ inside the probability is the hypothesis outputted by $\mca$ on the prefix $S_n$ of $S$ 
	which consists of the first $n$ examples in~$S$.
	\end{defn}
In words, Equation~\ref{e:pec} says that almost surely $\mca$ outputs hypotheses which are eventually correct on $D$.
	
\begin{defn} \label{d:mindchanges} We further say that $\mca$ PEC learns $\mch$ in a \emph{stable} fashion 
	if there exist $d\in\mathbb{N}$ such that 
	in addition to Equation~\ref{e:pec} the following holds:
	\begin{equation}\label{e:stable}
	\Pr\Bigl[\Bigl\lvert\{ n : h_n \neq h_{n+1}\}\Bigr\rvert \leq d\Bigr] = 1.
	\end{equation}
	That is, $\mca$ changes its output hypothesis at most $d$ times while processing the infinite sequence.
	(We stress that $d$ does not depend on the distribution $D$; 
	moreover, it can be shown that allowing $d$ to depend on the distribution boils down to PEC learnability\footnote{More precisely, any PEC learnable class $\mathcal{H}$ admits a learning rule $\mca$ satisfying that for every realizable distribution $D$ there exists an integer $d=d(D)$ for which Equation~\ref{e:stable} holds. }.)
	\end{defn}

\subsection{The SOA} \label{d:soa}
Recall that the SOA, or standard optimal algorithm, 
predicts by optimizing the $\Ldim$ if possible. That is, given any labeled sample 
$S = \{ (x_i, y_i) : i = 1, \dots, n \} \subseteq X \times \{ 0, 1 \}$ 
let $\mch_S = \{ h \in \mch: S \subseteq h \}$. If the SOA receives a sample $S$, first for each $x \in X$ it chooses 
$y_x \in \{ 0, 1 \}$ so that $\Ldim (\mch_{S \cup \{ (x, y_x) \}})$ is maximized, and in case of ties 
(or if the $\Ldim$ is 
undefined), chooses $y_x = 1$. Then it 
outputs $h = \{ (x, y_x) : x \in X \}$.   See e.g.\ \cite{bdss}.  As stated, the SOA makes sense for any class but it is only really 
useful when the Littlestone dimension is defined (otherwise the tiebreaking clause might be invoked everywhere). 

The characterization of PEC learning below mentions a variant of the SOA 
    which is defined for classes $\mch$ which are non Littlestone (i.e.\ which shatter arbitrarily large finite trees).
    This was defined and studied in~\cite{universalearning2020}, see below.
    The idea is to extend the definition of Littlestone dimension from natural numbers to ordinal numbers.
    It turns out that a class $\mch$ has a well defined ordinal Littlestone dimension
    if and only if $\mch$ does not shatter a complete infinite binary tree. E.g.\ the class of all thresholds over $\omega$
    has Littlestone dimension $\omega$ (more generally, the class of thresholds over an ordinal $\alpha$ has Littlestone dimension~$\alpha$).  
    The crucial property of the extended definition is that for every ordinal Littlestone class $\mch$
    and for every $x\in X$, %either
    at least one of 
    $\{h\in\mch : h(x)=0\}$ or $\{h\in\mch : h(x)=1\}$ have strictly smaller (ordinal)
    Littlestone dimension than $\mch$. Thus, the \emph{ordinal SOA} follows precisely the same strategy as the SOA
    of predicting via optimizing the ordinal Littlestone dimension.
    (Notice that this strategy guarantees that the ordinal SOA makes a finite (but perhaps unbounded) number of mistakes
    on every realizable sequence.)
\begin{ntn}\label{not:lazy}
The SOA algorithm has the property that it is {\it lazy} in the sense that it does not change its output hypothesis unless it makes a mistake, 
meaning that for all $n$, if $h_n(x_{n+1})=y_{n+1}$ then $h_{n+1}=h_n$.
\end{ntn}

\subsection{Prior work: universal learning} 
We aim to work in the statistical (say, as opposed to online) setting, and will build on the recent paper  \cite{universalearning2020} which 
deals with certain limitations of the PAC model compared to what is  called \emph{universal learning}.  
In the PAC model the performance of an algorithm $\mca$ is 
measured by looking at its PAC learning curve, that is, the function $\varepsilon$ which on each natural number $n$ returns the worst-case error $\mca$ may be expected to make on $n$ i.i.d.\ training examples, sampled from \emph{any} (in particular, from a ``worst-case'') realizable distribution. 
A priori, the distribution witnessing~$\varepsilon(n)$ may be different from the distribution witnessing $\varepsilon(n+1)$.  
\cite{universalearning2020} makes the case for studying a different measure of performance. Fixing a distribution $D$, look at the $D$-learning curve for $\mca$ (this is the function $f_D(n)$ which returns the error $\mca$ is expected to have when getting an input sample of $n$ i.i.d. training examples sampled from $D$).  
So we say that $\mch$ is \emph{universally} learned at e.g.\ an exponential rate if there is an algorithm $\mca$ so that for every realizable distribution $D$,
the $D$-learning curve is upper bounded by $C\cdot\exp(-c\cdot n)$, where $C,c$ are constants that might depend on $D$ (but not on $n$).
The main result of \cite{universalearning2020} is a trichotomy theorem characterizing the possible optimal universal learning rates for hypothesis classes. 
%WAS: the optimal universal learning rate of every class $\mch$.
Along the way, they establish the following useful fact: 

\begin{thm-lit}[\cite{universalearning2020} \S 4.1 -- \S 4.3] \label{universal-fact}
$\mch$ does not shatter an infinite Littlestone tree if and only if there exists a learner $\mca$ such that for every realizable distribution $D$,
\begin{equation} \label{eq:prev}
\lim_{N\to\infty} \Pr\Bigl[L_D(h_N)=0\Bigr] = 1.
\end{equation} 
\end{thm-lit}

Theorem \ref{universal-fact} does not fit our purposes out of the box\footnote{In particular, notice that the term ``eventually correct'' is informally used in~\cite{universalearning2020} to describe Theorem~\ref{universal-fact}, but it is not hard to see that Equation~\ref{eq:prev} is not equivalent to the definition of PEC learning in the sense that there exist learning rules that satisfy it but are not PEC learners.}, but inspecting its proof a very useful feature appears: 

\begin{fact} \label{fact-soa}
If $($and only if$)$ the equivalent conditions of Theorem \ref{universal-fact} are satisfied, then they are satisfied by taking $\mca$ to be the ordinal SOA. 
\end{fact}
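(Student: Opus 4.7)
The plan is to verify that when $\mch$ does not shatter an infinite Littlestone tree, the ordinal SOA is itself a learner satisfying equation~\ref{eq:prev}; the converse is immediate, since if the equivalent conditions of Theorem~\ref{universal-fact} fail then no learner can fulfill them. So throughout, assume $\mch$ has a well-defined ordinal Littlestone dimension, let $D$ be any realizable distribution, and write $S_N$ for the prefix of length $N$ of an i.i.d.\ sample $z_1,z_2,\dots$ from $D$, and $h_N$ for the ordinal SOA's output on $S_N$.

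The first step is to note that on any realizable infinite sequence the ordinal SOA makes only finitely many online mistakes: if $h_N(x_{N+1}) \neq y_{N+1}$ then the key property of ordinal Littlestone dimension forces $\Ldim(\mch_{S_{N+1}}) < \Ldim(\mch_{S_N})$, and well-foundedness of the ordinals precludes an infinite strictly decreasing chain. Deterministically, there is then a (random) last mistake time $N^\ast$, after which $\Ldim(\mch_{S_N})$ takes a constant value $\alpha_\infty$.

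The second step is to show that the output hypothesis stabilizes past $N^\ast$. Fix $x \in X$ and set $y^\ast := h_{N^\ast}(x)$; by the key property and the argmax rule, $\Ldim(\mch_{S_{N^\ast} \cup \{(x, y^\ast)\}}) = \alpha_\infty$ while $\Ldim(\mch_{S_{N^\ast} \cup \{(x, 1-y^\ast)\}}) < \alpha_\infty$. For $N \geq N^\ast$, monotonicity of dimension under enlarging the training set gives $\Ldim(\mch_{S_N \cup \{(x, 1-y^\ast)\}}) < \alpha_\infty$, while the key property applied at step $N$ forces the larger of the two dimensions to equal $\Ldim(\mch_{S_N}) = \alpha_\infty$; hence $\Ldim(\mch_{S_N \cup \{(x, y^\ast)\}}) = \alpha_\infty$ and the argmax remains $y^\ast$. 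So $h_N = h_{N^\ast} =: h^\ast$ for all $N \geq N^\ast$.

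Finally, a conditional Borel--Cantelli argument yields $L_D(h^\ast) = 0$ almost surely. Letting $\mathcal{F}_N := \sigma(z_1, \dots, z_N)$, the event ``mistake at step $N+1$'' lies in $\mathcal{F}_{N+1}$, and independence of $z_{N+1}$ from $\mathcal{F}_N$ gives $\Pr[\text{mistake at } N{+}1 \mid \mathcal{F}_N] = L_D(h_N)$. Levy's conditional Borel--Cantelli lemma equates, up to null sets, the event $\{\sum_N L_D(h_N) = \infty\}$ with the event of infinitely many mistakes; by Step 1 the latter is null, so $L_D(h_N) \to 0$ almost surely, and Step 2 upgrades this to $L_D(h^\ast) = 0$ almost surely. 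This is in fact stronger than equation~\ref{eq:prev}. The main obstacle is the stabilization step, which requires careful use of the strict inequality in the key property to rule out a silent argmax flip as $\mch_{S_N}$ continues to shrink past $N^\ast$; the Borel--Cantelli step is routine once one passes to the conditional formulation.
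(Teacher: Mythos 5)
Your Step 1 is correct and your Step 3 (Levy's conditional Borel--Cantelli) is a clean observation, but Step 2 --- which you correctly flag as the main obstacle --- rests on a version of the ``key property'' that is stronger than what actually holds, and is in fact false. The property the paper states (end of \S\ref{d:soa}) is only that for any $x$, \emph{at least one} of $\Ldim(\mch^{x=0}),\Ldim(\mch^{x=1})$ is strictly smaller than $\Ldim(\mch)$. You upgrade this to ``the side achieving the argmax has $\Ldim$ equal to $\Ldim(\mch)$,'' i.e.\ $\max_y\Ldim(\mch^{x=y})=\Ldim(\mch)$. That can fail: take $X=\{a,b,c\}$ and $\mch=\{h:\{a,b,c\}\to\{0,1\}: h(a)h(b)\neq 11\}$, so $|\mch|=6$. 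One checks $\Ldim(\mch)=2$, while $\mch^{c=0}$ and $\mch^{c=1}$ are each (up to relabeling) the three functions $\{00,01,10\}$ on $\{a,b\}$ and have $\Ldim=1$. So both restrictions drop strictly.

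This single error propagates: (i) ``$\Ldim(\mch_{S_N})$ takes a constant value $\alpha_\infty$ past $N^\ast$'' is also unjustified --- the version space can keep shrinking after a correct prediction and its ordinal $\Ldim$ can strictly decrease without a mistake occurring; and (ii) the displayed equality $\Ldim(\mch_{S_{N^\ast}\cup\{(x,y^\ast)\}})=\alpha_\infty$ and the subsequent ``the argmax remains $y^\ast$'' do not follow. Indeed the per-coordinate argmax of the SOA as defined in \S\ref{d:soa} \emph{can} flip without a mistake (e.g., when a later, shrunken version space produces a tie that the ``ties $\to 1$'' rule resolves differently from the earlier strict comparison), so $h_N=h_{N^\ast}$ for all $N\ge N^\ast$ is not a theorem with $N^\ast$ being the last-mistake time. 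Without exact stabilization, Step 3 gives $L_D(h_N)\to 0$ a.s.\ but not $\Pr[L_D(h_N)=0]\to 1$, which is what Equation~(\ref{eq:prev}) asks. For comparison, the paper proves this fact only by pointing to the proof in \cite{universalearning2020}; a self-contained argument would need to (a) wait for the stabilization time of the ordinal sequence $\Ldim(\mch_{S_N})$ rather than the last mistake time, and (b) handle the tiebreaking carefully (or choose a tiebreaking convention under which the SOA is genuinely lazy), neither of which your Step 2 does.
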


To recap some points from \S \ref{d:soa}, since we have defined the SOA to make sense for any class, the reader can take ``ordinal SOA'' in Fact \ref{fact-soa} 
to mean ``SOA''. 
Recall from the discussion there that ``$\mch$ does not shatter an infinite Littlestone tree'' does not necessarily imply ``$\mch$ does not shatter some Littlestone tree of finite height.'' (Model theorists should understand this as a 
feature of the absence of compactness.)  
The presence of an ordinal SOA is good news and bad news for our present purpose (of finding a natural and simple statistical characterization of Littlestone classes).
Indeed, it follows from the characterization in~\cite{universalearning2020} that there is no finer universal characterization of Littlestone classes within the ordinal ones.

So identifying the role of finite $\Ldim$, our Littlestone, 
requires something else.  
The key contribution of the section's first theorem, Theorem \ref{t:pec}, 
is in identifying Definition~$\ref{d:pec-learnable}$ as 
a convenient and natural framework for our purposes.
Once this definition is in place, stating and proving \ref{t:pec} 
as we do below via Theorem \ref{universal-fact} is not difficult. 
Theorem \ref{t:pec} sets the stage for a characterization of finite Littlestone via Theorem \ref{t:stablepec}.  This second theorem requires a proof (specifically $(1)\implies (2)$ as discussed there).
We have called the learning model stable PEC learning since, as discussed, it amounts to a kind of stability of the algorithm.

In the rest of this section, we will prove the two promised theorems.  

\begin{theorem}[PEC Learning]\label{t:pec}
The following are equivalent for a class~$\mch$:
\begin{enumerate}
\item $\mch$ is PEC learnable,
\item $\mch$ does not shatter an infinite Littlestone tree.
\end{enumerate}
Moreover, whenever $\mch$ satisfies the above items, then 
the $($ordinal$)$ SOA is a PEC learner for $\mch$.
\end{theorem}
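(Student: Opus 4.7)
My plan is to prove both implications, handling the moreover clause inside $(2)\Rightarrow(1)$.

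For $(2)\Rightarrow(1)$ and the moreover clause, I take the candidate PEC learner to be the ordinal SOA. Fix a realizable $D$, let $(x_i,y_i)_{i\geq 1}$ be i.i.d.\ from $D$, and write $h_n$, $\alpha_n:=\Ldim(\mch_{S_n})$ for the SOA's output and rank on $S_n$. By the rank characterization from~\S\ref{d:soa}, every mistake forces $\alpha_{n+1}<\alpha_n$: the SOA predicts the rank-maximizing label, so a wrong prediction means the true label strictly lowers the rank. Well-foundedness of ordinals then produces a random stabilization time $N^*<\infty$ at some ordinal $\alpha^*$, and finitely many mistakes a.s. L\'evy's conditional Borel--Cantelli lemma, applied to the adapted mistake events (whose conditional probability given $\mathcal{F}_n=\sigma(S_n)$ equals $L_D(h_n)$), converts this to $\sum_n L_D(h_n)<\infty$ a.s., hence $L_D(h_n)\to 0$ a.s.

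The main obstacle is to upgrade $L_D(h_n)\to 0$ to the eventual equality required by Definition~\ref{d:pec-learnable}. My plan is to show that $h_n$ coincides with $h_{N^*}$ on $D_X$-almost every $x$ for every $n\geq N^*$, where $D_X$ denotes the marginal of $D$ on $X$. After $N^*$ the SOA's choice at $x$ is pinned down by the unique label preserving rank $\alpha^*$, and drift from $h_{N^*}(x)$ can occur only when $x$ has entered a ``Case B'' at time $n$ in which both labels strictly drop the rank below $\alpha^*$. If this drift set $B_n$ had positive $D_X$-mass for some $n\geq N^*$, then with positive conditional probability $x_{n+1}\in B_n$ and $\alpha_{n+1}<\alpha^*$, contradicting $N^*$-stability; formalising this via a conditional Borel--Cantelli argument on the adapted events ``$x_{n+1}$ causes an $\alpha$-drop'' yields $D_X(B_n)=0$ a.s.\ for all $n\geq N^*$. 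Consequently $L_D(h_n)=L_D(h_{N^*})$ for every $n\geq N^*$, and combined with $L_D(h_n)\to 0$ this forces $L_D(h_{N^*})=0$, establishing PEC with witness the ordinal SOA.

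For $(1)\Rightarrow(2)$ I argue the contrapositive. Suppose $\mch$ shatters an infinite Littlestone tree $\{x_\eta:\eta\in 2^{<\omega}\}$ via branch hypotheses $\{h_\rho:\rho\in 2^\omega\}$. For each $\rho\in 2^\omega$ let $D_\rho$ be the distribution on $X\times\{0,1\}$ with $D_\rho(\{(x_{\rho\rstr k},\rho(k))\})=2^{-k-1}$; this is realizable via $h_\rho$. Picking $\rho$ uniformly in $2^\omega$, any $n$-sample from $D_\rho$ reveals only a random initial segment of $\rho$, leaving infinitely many bits of $\rho$ uniform and independent of the learner's (measurable) output $h_n$. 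A geometric-series computation then gives $\Pr_{\rho,S}[L_{D_\rho}(h_n)=0]=0$ for every $n$, so by countable union $L_{D_\rho}(h_n)>0$ for every $n$ simultaneously, a.s.\ over $(\rho,S)$. By Fubini, a positive-measure set of $\rho$ satisfies $\Pr_{S\sim D_\rho^\infty}[L_{D_\rho}(h_n)>0\ \forall n]=1$, so PEC fails on such $D_\rho$.
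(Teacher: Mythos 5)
Both directions take a genuinely different route from the paper's. The paper proves $(2)\Rightarrow(1)$ by invoking Theorem~\ref{universal-fact} of~\cite{universalearning2020} and Fact~\ref{fact-soa} as black boxes, and then upgrades $\lim_N\Pr[L_D(h_N)=0]=1$ to the PEC property using only the observation that the ordinal SOA is \emph{lazy}: if $h_n(x_{n+1})=y_{n+1}$ then $h_{n+1}=h_n$, so $\Pr[(\forall n\geq N)L_D(h_n)=0]=\Pr[L_D(h_N)=0]$; $(1)\Rightarrow(2)$ is then just the trivial implication via the same cited theorem. You instead argue from first principles in both directions, which is more self-contained but also more exposed.

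Your $(1)\Rightarrow(2)$ argument (uniform random branch $\rho$, realizable distribution $D_\rho$ supported on $\{(x_{\rho\rstr k},\rho(k))\}$ with geometric weights, and the observation that $S_n$ pins down only finitely many bits of $\rho$, leaving infinitely many independent fair bits so $\Pr[L_{D_\rho}(h_n)=0\mid S_n]=0$) is correct and a nice elementary substitute for the cited theorem. Your $(2)\Rightarrow(1)$ begins well: every SOA mistake forces a strict drop of the ordinal Littlestone rank, well-foundedness gives finitely many mistakes a.s., and L\'evy's conditional Borel--Cantelli does convert this to $\sum_n L_D(h_n)<\infty$ a.s., hence $L_D(h_n)\to 0$ a.s.

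However, the ``Case~B'' upgrade from $L_D(h_n)\to 0$ to ``$L_D(h_n)=0$ eventually'' has a genuine gap. The conditional Borel--Cantelli argument on the rank-drop events only yields $\sum_n D_X(B_n)<\infty$ a.s., hence $D_X(B_n)\to 0$ a.s.; it does not give $D_X(B_n)=0$ for all $n\geq N^*$. The claimed contradiction is not one: on the event $\{N^*\leq n,\ D_X(B_n)>0\}$ the conditional probability of a rank drop at step $n+1$ is positive, and $N^*\leq n$ means that drop did not occur, but this merely places the event inside the complement of the drop event without forcing it to be null --- just as observing a biased coin show tails does not prove the bias is zero. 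Iterating over all $m>n$ also does not obviously close the gap because the sets $B_m$ shrink unpredictably with the random sample. What actually closes the argument, and what the paper uses, is the SOA's laziness: after the (a.s.\ finite) last mistake the SOA's output never changes at all, so $h_n=h_{N^*}$ exactly for $n\geq N^*$; then $L_D(h_n)$ is eventually constant and tends to $0$, hence eventually $0$. Replacing your Case~B argument with this one-line appeal to laziness repairs the proof while keeping the rest of your (distinct) route intact.
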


\begin{theorem}[Stable PEC Learning]\label{t:stablepec}
The following are equivalent for a class $\mch$:
\begin{enumerate}
\item $\mch$ is PEC learnable in a stable fashion,
\item $\mch$ does not shatter arbitrarily large finite Littlestone trees. 
(Equivalently, $\mch$ is a Littlestone class.)
\end{enumerate}
Moreover, whenever $\mch$ satisfies the above items, then the SOA is a PEC learner for~$\mch$ that makes at most $d=\Ldim(\mch)$ many mind changes (i.e.\ changes its ouptut hypothesis at most $d$ times on every countably infinite realizable sequence).
\end{theorem}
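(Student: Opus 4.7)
The plan is to prove both implications. For $(2)\Rightarrow(1)$, I would verify that when $\Ldim(\mch)=d<\infty$, the SOA is itself a stable PEC learner with mind-change parameter $d$. The PEC property follows from Theorem~\ref{t:pec} combined with Fact~\ref{fact-soa}, since a class of finite Littlestone dimension does not shatter any infinite Littlestone tree. For the mind-change bound, I would exploit the version-space structure of the SOA together with the classical online mistake bound: each SOA mistake strictly decreases the Littlestone dimension of the version space, so at most $d$ mistakes occur on any realizable sequence. The main lemma to establish is that each mind change $h_n\neq h_{n+1}$ of the SOA (for $n\ge 1$) is associated with a corresponding rank-decreasing restriction of the version space, so the total number of mind changes on any realizable input sequence is also bounded by $d$ deterministically, hence almost surely under i.i.d.\ sampling.

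For $(1)\Rightarrow(2)$, argue by contrapositive. Suppose $\Ldim(\mch)\ge d+1$; fix a Littlestone tree of depth $d+1$ shattered by $\mch$, with internal nodes $\{x_\eta:\eta\in\{0,1\}^{\le d}\}$ and leaves $\{h_\rho:\rho\in\{0,1\}^{d+1}\}\subset\mch$. For each $\rho\in\{0,1\}^{d+1}$, let $D_\rho$ denote the uniform distribution on the $d+1$ path nodes $\{x_\eta:\eta\prec\rho\}$ with labels given by $h_\rho$; each $D_\rho$ is realizable by $h_\rho$. Stable PEC of any candidate learner $\mca$ on $D_\rho^\infty$ forces the eventual hypothesis $h_\infty^\rho$ to agree with $h_\rho$ on all $d+1$ path nodes, so $h_\infty^\rho$ encodes the full $(d+1)$-bit identity of $\rho$. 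The argument would then produce some $\rho^*\in\{0,1\}^{d+1}$ such that $\mca$ on $D_{\rho^*}^\infty$ exceeds $d$ mind changes with positive probability, contradicting stable PEC for $D_{\rho^*}$.

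The main obstacle is producing this $\rho^*$ and certifying that the mind-change bound is violated on a single $D_{\rho^*}$ rather than only on average over $\rho$. The natural attempt --- adversarially walking down the tree, at each step $k$ choosing $\rho^*_k$ to contradict $\mca$'s current prediction at $x_{\rho^*_1\cdots\rho^*_{k-1}}$ after it processes the prefix $(x_{\langle\rangle},\rho^*_1),\ldots,(x_{\rho^*_1\cdots\rho^*_{k-1}},\rho^*_{k-1})$ --- forces $d+1$ online mistakes of $\mca$, but not obviously $d+1$ distinct mind changes (a stubborn learner could in principle repair all $d+1$ wrong bits in a single update). The argument is completed by combining this adversarial choice with two further observations: first, that $D_\rho$ and $D_{\rho'}$ are statistically indistinguishable on their shared path-prefix, so the first mind changes of $\mca$ cannot depend on the unrevealed tail of $\rho$; and second, that a stable PEC learner visits at most $d+1$ distinct hypotheses per run while the $2^{d+1}$ eventual hypotheses $h_\infty^\rho$ are all distinct. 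A pigeonhole/counting argument over $\rho\in\{0,1\}^{d+1}$ combined with the positive-probability ``root-to-leaf arrival'' event (a coupon-collector configuration on $d+1$ items) then isolates a single $\rho^*$ on which $\mca$ must cycle through more than $d+1$ hypotheses with positive probability under $D_{\rho^*}^\infty$, giving the required contradiction.
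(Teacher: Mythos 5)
Your $(2)\Rightarrow(1)$ direction is correct and is essentially the paper's: the SOA is a PEC learner by Theorem~\ref{t:pec} (finite $\Ldim$ rules out infinite shattered trees), it is lazy (it changes its output only when it errs), and it makes at most $d=\Ldim(\mch)$ errors on any realizable sequence, so it makes at most $d$ mind changes almost surely.

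The gap is in $(1)\Rightarrow(2)$, and you have in fact correctly located it yourself: forcing $d+1$ online \emph{mistakes} via an adversarial walk down the shattered tree does not force $d+1$ \emph{mind changes}, since the learner may correct several errors in a single update. The repair you sketch, however, does not close this gap. Fixing up front the $2^{d+1}$ leaf distributions $D_\rho$, each supported on an entire root-to-leaf path, surrenders exactly the adversarial control the argument needs: samples from $D_\rho^\infty$ arrive i.i.d.\ from the whole path, not level by level, so there is no moment at which one may choose the $k$-th bit of $\rho$ in response to the learner's hypothesis after $k$ revisions. The ``indistinguishability on shared prefixes'' is a conditioning observation that does not by itself restore that control, and the count of $2^{d+1}$ distinct eventual hypotheses against $d+1$ hypotheses per run has no clear bite, since each run involves only a single $\rho$.

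What is missing is the observation that powers the paper's proof: a PEC learner whose current output has strictly positive loss on the current distribution must, almost surely, change its mind at some later time, since otherwise its loss stays bounded away from zero along the entire tail of the sequence. With this in hand, one builds the adversarial sequence incrementally by induction on the depth of the shattered tree: maintain a finite realizable sequence $R_m$ supported along a branch of the depth-$m$ subtree on which the learner has already spent $m-1$ mind changes and whose last example it currently misclassifies; sample i.i.d.\ from the uniform distribution $D_m$ on $R_m$ and \emph{wait} for the $m$-th mind change (forced a.s.\ by PEC on a positive-probability event); only then read off the new hypothesis' prediction at the current node to decide which child to append next, with the opposite label. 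This simultaneously reveals the tree adaptively and converts each planted mistake into a genuine mind change --- precisely the two things your one-shot, leaf-at-a-time scheme cannot do.
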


\subsection{Proofs}

\begin{rmk} \label{r:measure}
The PEC learning model is defined using the language of probability theory. As such, it implicitly assumes a measure space  (and in particular a $\sigma$-algebra) which defines the set of distributions that are used. In fact, \cite{universalearning2020} prove Theorem~\ref{universal-fact} under a standard measurability assumption from empirical process theory, where it is usually called the image admissible Suslin property.\footnote{Specifically that the domain $X$ is a Polish space (a topological space with a dense countable set that can be metrized by a complete metric), and that the hypothesis class $\mch$ admits a  measurable parametrization in the sense that there is a Polish parameter space $\Theta$ and a Borel-measurable map $\mathtt{h}:\Theta\times X\to\{0,1\}$ such that $\mch=\{\mathtt{h}(\theta,\cdot) : \theta\in\Theta\}$ (see Definition 3.3 in~\cite{universalearning2020}).}. 
Thus, we inherit this assumption from  \cite{universalearning2020} (but in fact this assumption is also implicit in uniform convergence from PAC learning).
However, the reader can simply have in mind the stronger assumption that the domain $X$ is countable and that the $\sigma$-algebra is the entire power set of $X$.  
\end{rmk}

\begin{proof}[Proof of Theorem~\ref{t:pec}]

Let $D$ be a distribution realizable by $\mch$ and let $S=\{z_n\}_{n=1}^\infty$ be i.i.d examples $z_n=(x_n,y_n)$ drawn from $D$. 
Recall that a learning rule $\mca$ PEC learns $\mch$ if
\begin{equation} \label{e:one} 
\Pr\Bigl[(\exists N)(\forall n\geq N): L_D(h_n)=0 \Bigr] = 1, 
\end{equation} 
where $h_n$ is the hypothesis outputted by $\mca$ when trained on the first $n$ examples from $S$.
By $\sigma$-additivity (\ref{e:one}) is equivalent to 
\begin{equation}\label{eq:needtoshow}
\lim_{N\to\infty} \Pr\Bigl[(\forall n\geq N): L_D(h_n)=0\Bigr] = 1.    
\end{equation}
Thus, we need to show that $\mch$ does not shatter an infinite Littlestone tree if and only if there exists a learning rule $\mca$ satisfying Equation (\ref{eq:needtoshow}) for every realizable distribution $D$.

Since (\ref{eq:needtoshow}) implies (\ref{eq:prev}), it implies (by Theorem \ref{universal-fact}) {that $\mch$ 
does not shatter an infinite Littlestone tree. }
This shows $(1)\implies (2)$.

For $(2) \implies (1)$, assume $\mch$ does not shatter an infinite Littlestone tree. Then by Theorem \ref{universal-fact} 
there exists a learning rule $\mca$ satisfying Equation (\ref{eq:prev}).
Moreover, {by Fact \ref{fact-soa}, we may take $\mca$ to be the $($ordinal$)$ SOA}. 
By the lazy quality of the SOA, see~\ref{not:lazy}, for all $N$
\begin{align*}
    &\Pr \Bigl[(\forall n\geq N): L_D(h_n)=0 ~\big\vert~ L_D(h_N) = 0\Bigr]\geq\\
     &\Pr\bigl[(\forall n\geq N): h_N(x_n)= y_n ~\big\vert~ L_D(h_N)=0\bigr]=1,
\end{align*}
where the above inequality ``$\geq$'' follows because the ordinal SOA is lazy.
Thus, by the definition of conditional probability it follows that the ordinal SOA satisfies that for all $N$, 
\[ \Pr\Bigl[L_D(h_N)=0\Bigr] = \Pr\Bigl[(\forall n\geq N): L_D(h_n)=0\Bigr].\]
In particular, since the ordinal SOA satisfies Equation (\ref{eq:prev}), {the limit as $N \rightarrow \infty$ of the left-hand side is $1$, 
so the same is true of the right-hand side, so (\ref{eq:needtoshow}) holds} and hence it PEC learns $\mch$.
\end{proof}

\begin{proof}[Proof of Theorem~\ref{t:stablepec}]
Consider first the direction $(2) \implies (1)$.
Let $\mch$ be a class satisfying Item $(2)$. Thus, $\mch$ is a Littlestone class, and by Theorem~\ref{t:pec} the SOA is a PEC learner for $\mch$. 
Thus, the SOA makes at most $d=\Ldim(\mch)$ many mistakes on every realizable sequence, and therefore also on (almost) every random i.i.d sequence~$S$ which is drawn from a {given} realizable distribution $D$. The latter, combined with the fact that the SOA is lazy (see \ref{not:lazy}) implies that it makes at most $d$ mind changes on (almost\footnote{i.e.\ with probability $1$.}) every sequence $S$ which is drawn from a realizable distribution $D$.

We now turn to prove the more challenging direction $(1)\implies (2)$. 
    Our strategy is to consider the contra-positive statement and show that if $\Ldim(\mch) > d$
    then every learner that makes at most $d$ mind changes is not a PEC learner for $\mch$.
    In more detail, let $\mca$ be a learning rule that makes at most $d$ mind changes 
    on almost every sequence $S$ which is drawn from a distribution $D$ that is realizable by $\mch$.
    Assumes towards contradiction that $\mca$ PEC learns $\mch$.

It is enough to prove that there exists a finite realizable sequence $R_\ell=\{z_i\}_{i=1}^\ell$ 
    such that $\mca$ makes $d$ mind changes on the subsequence $R_{\ell-1}=\{z_i\}_{i=1}^{\ell-1}$ 
    and $h(x_\ell)\neq y_\ell$, where $h$ is the output hypothesis of $\mca$ on $R_{\ell-1}$ and $z_\ell=(x_\ell,y_\ell)$. 
    Indeed, if this holds then pick $D$ to be uniform over the examples in $R_\ell$, 
    and notice that with a positive probability over drawing a countably infinite sequence $S$ from $D$, 
    it holds that $R_\ell$ is a prefix of $S$.
    On that event, $\mca$ outputs $h$ infinitely often (on all prefixes of $S$ that contain $R_\ell$), 
    and since \tcb{$L_D(h)\geq \ell^{-1}>0$} it follows that $\mca$ does not PEC learn $\mch$,
    which is a contradiction.

It thus remains to construct the sequence $R_\ell$.
Let $T$ be a tree of depth $d+1$ which is shattered by $\mch$. 
We prove by induction that the sequence $R_\ell$ can be constructed so that its examples belong to a branch of $T$.

Consider the base case $d=0$; thus $T$ has depth $1$. Let $x\in \mcx$ denote the instance labeling the root of $T$. Since $\mca$ makes $0$ mind changes, then it outputs an hypothesis $h_0$ which does not depend on the input sample. 
In particular the sequence $R_1=\{(x,1-h_0(x))\}$ which consists of a single example satisfies the requirement.

For the induction step, assume $d>0$, and let $T$ be a tree of depth $d+1$. 
So, we want to prove that there exists a sequence $R_\ell$ whose examples belong to one of the branches of $T$ such that $\mca$ makes $d$ mind changes on $R_{\ell-1}$ and $h_{\ell-1}(x_\ell)\neq y_\ell$.
Let $T_{d}$ be the subtree of $T$ obtained by removing the last level. 
By induction hypothesis, there exists a sequence $R_m=\{z_i\}_{i=1}^{m}$ whose examples belong to a branch of $T_d$ such that $\mca$ makes $d-1$ mind changes on $R_{m-1}$, and $h_{m-1}(x_m)\neq y_m$, where $h_{m-1}$ is the hypothesis outputted by $\mca$ on $R_{m-1}$. 
Let $D_m$ denote the uniform distribution over the examples in $R_{m}$.  
Let $S$ be an infinite i.i.d sequence drawn from $D_m$
and consider the event $E$ that $R_m$ is a prefix of $S$ (in the same order). 
Note that $E$ has a positive measure.
Now condition on $E$ and imagine simulating the algorithm $\mca$ on $S$. 
After processing $R_{m-1}$, we have that $\mca$ made $d-1$ mind changes.
Next, it will process $(x_m,y_m)$ and by assumption, the hypothesis $h_{m-1}$
misclassifies $x_m$; now this does not mean that $\mca$ will change its mind at this point,
but eventually, since $\mca$ is assumed to PEC learn $\mch$ it must change its mind.
Indeed, otherwise its loss on $D_m$ will be at least $m^{-1}>0$. 

This shows that indeed there exists a finite sequence~$R_{\ell-1}$ such that
(i) $R_m$ is a prefix of $R_{\ell-1}$,
(ii) every example in $R_{\ell-1}$ already appears in $R_m$,
(iii) $\mca$ makes exactly $d$ mind changes on $R_{\ell-1}$.
Let~$h_{\ell-1}$ denote the hypothesis outputted by $\mca$ on $R_{\ell-1}$
and let $x_\ell$ be the child of $x_m$ corresponding to the label~$h_{\ell-1}(x_m)$.
The sequence~$R_\ell$ obtained by concatenating $\{(x_\ell, 1 - h_{\ell-1}(x_\ell))\}$ 
satisfies the requirement. This finishes the proof.
\end{proof}

\section{Littlestone classes have frequent definitions} 
\label{s:frequent1}
 
Before continuing, let us derive from the PEC theorem 
\ref{t:stablepec}  
what we might call a 
	{\it frequent definitions phenomenon}: namely that stable concepts
	have \emph{short} definitions $($they are defined using at most $d$ examples, where $d$ is the Littlestone dimension$)$,
	and that these definitions are \emph{frequent} in the sense that a typical sequence
	of IID examples contains such a definition almost surely.

\begin{cor} \label{t:discussion}
If $\mch$ is a Littlestone class of dimension $d$, then:

\begin{enumerate}

\item[(A)] Every hypothesis $h \in \mch$ has a definition of size $\leq d$, meaning that there is $h^\prime \subseteq h$ which is a partial function with 
$|\dom(h^\prime)| \leq d$ such that $h = \operatorname{SOA}(h^\prime)$  [the notation means $\operatorname{SOA}$ reconstructs $h$ from $h^\prime$].

\item[(B)] Every hypothesis $h \in \mch$ has \emph{frequent} definitions of size $\leq d$, meaning that  for any distribution $\de$ over $X$ there is some 
hypothesis $g$ such that:
\begin{itemize}
    \item $g$ differs from $h$ on a set of measure zero, and 
    \item a random sequence of i.i.d.\ examples almost surely contains a definition for $g$ of size $\leq d$, meaning a set from which the SOA reconstructs~$g$. 
\end{itemize} 
\end{enumerate}

\end{cor}

\begin{proof}
(A) follows from the definition of Littlestone dimension. 

(B) This is exactly what we have proved about PEC. 
\end{proof}

Notice that this theorem is ``if then'' and not stated as a characterization. 

As suggested in the theorem, a reasonable interpretation of Item A is that Littlestone classes admit short definitions, and a reasonable interpretation of Item B is that these definitions are abundant (or frequent). This raises the question of whether there is a general meaningful formal definition of ``short definitions'' and ``frequent definitions'' in a way that is satisfied by Item A and B.
Perhaps the most natural way to do it is to define that a class $\mch$ admits \emph{short definitions} of size $\leq d$ if there exists a learning rule $\mca$ such that for every hypothesis $h\in\mch$ there exists a partial function $h'\subseteq h$ of size at most $d$ such that $\mca(h')=h$, and to define that $\mch$ admits \emph{frequent definitions} if in addition for any distribution $\de$ over $X$ there is some hypothesis $g$ such that:
$g$ differs from $h$ on a set of measure zero, and 
a random sequence of i.i.d. examples almost surely contains a definition for $g$ of size $\leq d$, meaning a set from which the learning rule $\mca$ reconstructs~$g$. 

However, this naive definition does not provide a characterization of Littlestone classes. In other words, there are non Littlestone classes $\mch$ which have both short and frequent definitions in the above sense. A simple example is the class $\mch = \{1[x\geq n] : n\in\mathbb{N}\}\subseteq \{0,1\}^\mathbb{N}$. Namely the class of all thresholds over the natural numbers. Clearly this is not a Littlestone class but any threshold is naturally definable by the two points in which the label changes from $0$ to $1$, and for any distribution over $\mathbb{N}$, a random infinite i.i.d sequence will almost surely contain the two points in support of the distribution where the label changes from $0$ to $1$. 

Is there a correct point of view from which we can see that Littlestone classes are indeed characterized by having definitions of certain kinds?  The next section takes up this question.

\vspace{5mm}

\section{Littlestone classes characterized by frequent approximations} \label{s:frequent} 

\setcounter{theoremcounter}{0}

One of the characteristic properties of types in stable theories is that they can be faithfully reconstructed from a very small amount of 
information, so-called \emph{definability of types}. 
This key idea has not found a straightforward translation to learning theory and hypotheses in Littlestone classes. For instance, there is a substantial body of work around \emph{sample compression} in learning theory, leading to many interesting results in VC classes but not necessarily Littlestone ones.
Per the previous section, any hypothesis in a Littlestone class may be exactly reconstructed by applying the SOA to a specific finite segment in the manner of a definition; a priori the probability of drawing the parameters needed by sampling might be zero. PEC significantly changes the picture with its frequent short definitions, but is of course guaranteed to be correct only up to measure zero. After PEC, we may still ask about the role of the SOA and about whether to relax measure zero.

In writing this paper we arrived at the conclusion that the correct analogue of ``definition'' for this context is hidden in plain sight.  

We propose the heuristic that \emph{learning is the existence of approximations}, that is, learning algorithms are themselves what we might call approximate definitions for target concepts. In this sense, it becomes possible and natural to see the boundary of Littlestone within VC classes, as this section will explain. 

To proceed more formally, first let us re-present the main learning concepts discussed so far, and some others from recent work in the literature, as statements about existence of approximations. This will allow us to formulate an equivalence theorem \ref{t:equivalence}, the main result of the section.  Theorem \ref{t:equivalence} compares several recent learning frameworks characterizing Littlestone classes using this common language, and suggests common themes which have so far not emerged from various pairwise comparisons. (The addition of PEC also helps clarify the picture.) 
At the end of the section we will return to the question of what underlies learnability of Littlestone classes, and suggest also an informal answer. 

\begin{defn}[Approximations] 
Say that the algorithm $\mca$ gives 
\[ \mbox{ $(\alpha, \beta, n)$-approximations for $\mch$ } \]if for any distribution $D$ which is realizable by $\mch$, 
with probability at least $1-\alpha$ over all samples of size $n$ drawn from $D^n$, $\mca(S)$ has $D$-population loss at most $\beta$.
% is $\beta$-close to $h$ w.r.t $D$.  
\end{defn}

Then the definition of PAC learnability of a class $\mch$ says precisely that it has $(\alpha, \beta, n)$-approximations for all $\alpha,\beta$ and sufficiently large $n=n(\alpha,\beta)$.

\textbf{Note}: The next few definitions extend the definition of PAC learning and so we assume a quantification over $\alpha, \beta, n$ and the distribution $D$.

\begin{defn}[Globally stable approximations]\label{def:globst} Given $\eta > 0$
say that $\mca$ gives $\eta$-globally stable frequent approximations for $\mch$ if in addition, 
one specific output hypothesis occurs with probability at least $\eta$.

Say that $\mch$ has globally stable approximations if there exists a fixed positive constant $\eta>0$ and a (possibly randomized) learning rule $\mca$ such that for every $\alpha,\beta>0$ there exists $n$
for which $\mca$ gives an $\eta$-globally stable $(n,\alpha,\beta)$-approximations for $\mch$.
\end{defn}

Note that the $\eta$-frequent hypothesis might depend on $D$ and on $n$, but it does not depend on the input sample drawn for $D$.
While we do not explicitly require that the $\eta$-frequent hypothesis has small loss, this is the intended interpretation (it is implied by setting the confidence parameter $\beta<\eta$).

\begin{defn}[Differentially private approximations]\label{def:privapx} Given $\eps,\delta > 0$
say that $\mca$ gives $(\eps,\delta)$-differentially private frequent approximations for $\mch$ if in addition, 
$\mca$ is $(\eps,\delta)$-differentially private.

Say that $\mch$ has differentially private approximations if there exist a fixed positive constant $\eps>0$, a negligible\footnote{Recall that a function $\delta(n):\mathbb{N}\to \mathbb{R}_{\geq 0}$ is negligible if $\lim_{n\to \infty}\delta(n)\cdot P(n) = 0$ for every polynomial $P$.} $\delta=\delta(n)$, and a (possibly randomized) learning rule~$\mca$ such that for every $\alpha,\beta$ there exists $n$ for which $\mca$ gives an $(\eps,\delta)$-differentially private $(n,\alpha,\beta)$-approximations for $\mch$.
\end{defn}

The next couple of definitions use basic quantities from information theory: the Kullback-Leibler (KL) divergence, and Shannon's mutual information (see e.g.~\cite{Cover2006}).

\begin{defn}[PAC Bayes stable approximations]\label{def:pbs}
Say that $\mca$ gives $r$-PAC Bayes stable frequent approximations for $\mch$ if in addition there exists a prior distribution $p=p(D)$ over the output space of $\mca$ such that 
	\[\Ex_{S}\Bigl[\mathtt{kl}\bigl(\mca(S) ~||~ p\bigr)\Bigr] \leq r,\]
	where $\mca(S)$ denotes the posterior distribution over the output space of $\mca$ given the input sample $S$.
	In other words, the posterior is similar (according to the KL-divergence) to a prior distribution which depends only on the distribution $D$ (but not on the data-sample $S$).

Say that $\mch$ has PAC Bayes stable approximations if there exist a sublinear $r=r(n)$ and a (possibly randomized) learning rule $\mca$ such that for every $\alpha,\beta$ there exists $n$ for which $\mca$ gives an $r$-PAC Bayes stable $(n,\alpha,\beta)$-approximations for $\mch$.
\end{defn}

\begin{defn}[Information stable approximations]\label{def:is}
Say that $\mca$ gives $r$-information stable frequent approximations for $\mch$ if 
	\[I\bigl(S,A(S)\bigr) \leq r,\]
	where $I(\cdot, \cdot)$ is Shannon's mutual information function.
	In other words, the output hypothesis does not give away more than $r$-bits of information on the data-sample~$S$.

Say that $\mch$ has information stable approximations if there exist a sublinear\footnote{Recall that a function $r(n):\mathbb{N}\to \mathbb{R}_{\geq 0}$ is sublinear if $\lim_{n\to \infty} r(n)\cdot n^{-1} = 0$.}~$r=r(n)$ and a (possibly randomized) learning rule $\mca$ such that for every $\alpha,\beta$ there exists~$n$ for which $\mca$ gives an $r$-information stable $(n,\alpha,\beta)$-approximations for $\mch$.
\end{defn}

The new learning framework of the present paper, PEC learning, has a place of honor in this list.
%But since its definition was already given with the present ideas in mind, it will be most convenient to refer to it directly.}

\begin{defn}[Eventually stable almost everywhere correct approximations]\label{def:pecs}
Say that $\mca$ has eventually stable eventually correct approximations for $\mch$ in $d$~steps 
if~$\mca$ is a PEC-learner for $\mch$ with at most $d$ mind-changes.
\end{defn}

Before proceeding to Theorem \ref{t:equivalence}, let us compare PEC learning to the other items discussed. 
The PEC notion of stability is distinguished in several ways: (i) the SOA algorithm satisfies it. (The SOA is also the basis of the other proofs but the resulting stable algorithms are rather convoluted and not so elegant.) (ii) it is asymptotic in the sense that the correct hypothesis is only outputted eventually, (iii)~the error of the eventually outputted hypothesis is zero (rather than at most~$\eps$).
The last point implies some sort of sample independence of the output: indeed, zero error means that two outputs which are trained on two independent random sequences agree almost everywhere.
Another difference is that PEC requires a bounded number of mind-changes which is stability of the learning process rather than of the output (or distribution over outputs) as in the other definitions.

In short, PEC learning is a simpler definition which is arguably more basic, though also less practical: it is a natural analog of PAC learning which captures learning with \emph{zero} (=measure zero) error, by asking for uniform finiteness only on the number of mind-changes, not the sample size. 

We can now state the equivalence: 

\br

%\newpage
\begin{theorem}[Littlestone classes characterized via existence of approximations] \label{t:equivalence}
For $(X, \mch)$ the following are equivalent, and equivalent to the statement that $\mch$ is a Littlestone class. 
\begin{enumerate}
\item {\bf Repetition.} 
    \begin{itemize}
        \item[(a)] The class $\mch$ has globally stable approximations in the sense of Definition~\ref{def:globst}.
        \emph{Slogan: a unique hypothesis is outputted frequently and quickly; frequency might be small and the error might be nonzero.}    
        \item[(b)] The class $\mch$ has eventually stable approximations in the sense of Definition~\ref{def:pecs}.
        \emph{Slogan: a unique (up to measure zero) hypothesis is outputted with only a few mind changes but not necessarily quickly; frequency is one  and error is zero.}
    \end{itemize}
\item {\bf {Concentration.}} 
    \begin{itemize}
        \item[(a)] The class $\mch$ has PAC-Bayes stable approximations in the sense of Definition~\ref{def:pbs}.
        \emph{Slogan: posteriors (the distributions over output hypotheses conditioned on the input sample) are typically close to a data-independent prior.}    
        \item[(b)] The class $\mch$ has information stable approximations in the sense of Definition~\ref{def:is}.
        \emph{Slogan: the mutual information between the output hypothesis and the input data is small; 
        informally, thought of as random variables, the output hypothesis is nearly independent from the sample.}
    \end{itemize}
\item {\bf {Local stability.}} 
    \begin{itemize}
    \item[(a)] The class $\mch$ has differentially-private approximations in the sense of Definition~\ref{def:privapx}.
    \emph{Slogan: small changes in the input barely change the output.}    
    \end{itemize}
\end{enumerate}
\end{theorem}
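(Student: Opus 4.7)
The plan is a hub-and-spoke proof: show that each of the five listed conditions (1)(a), (1)(b), (2)(a), (2)(b), (3)(a) is equivalent to ``$\mch$ is a Littlestone class,'' and combine by transitivity. This organization is natural because each of these properties is expected to have a self-contained characterization in terms of finite $\Ldim$, and splitting along this hub avoids proving $\binom{5}{2}$ pairwise reductions.

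Three of the five equivalences are already available and would just be invoked. Condition (1)(a), globally stable approximations $\iff$ Littlestone, is Theorem~\ref{t:global} from \cite{BLM}; condition (1)(b), stable PEC approximations $\iff$ Littlestone, is Theorem~\ref{t:stablepec} proved above; condition (3)(a), differentially-private approximations $\iff$ Littlestone, is the two-part cited theorem combining \cite{almm} and \cite{BLM}. What remains is conditions (2)(a) and (2)(b). For the forward direction, Littlestone implies both, I would take the SOA-based globally stable learner produced in the proof of Theorem~\ref{t:global}: its output distribution is supported on a finite set of hypotheses depending only on $d = \Ldim(\mch)$, so choosing the PAC-Bayes prior $p$ to be this data-free marginal yields a KL-divergence bounded by $O_d(\log n)$, which is sublinear. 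Information stability then follows from the inequality $I(S, \mca(S)) \leq \Ex_S[\mathtt{kl}(\mca(S) \| p)]$, valid for any data-independent $p$ (indeed, equality is attained at the marginal of $\mca(S)$).

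The converse directions for (2)(a) and (2)(b) will be the main obstacle, and my plan is reduction rather than a fresh lower bound. Information stability implies PAC-Bayes stability by taking the prior to be the marginal of $\mca(S)$. Then PAC-Bayes stability implies global stability via a sample-and-release boosting argument: if $\mca(S)$ has sublinear KL divergence from a data-independent prior $p$, then by Markov the posterior $\mca(S)$ is close to $p$ on a $1-o(1)$ fraction of samples, so the most likely hypothesis under $p$ is outputted with a fixed nonzero frequency $\eta$. This gives global stability, and Theorem~\ref{t:global} then closes the cycle back to Littlestone. The delicate part here is parameter bookkeeping: one has to balance the sublinear rate $r(n)$ against the PAC accuracy $\alpha$ and confidence $\beta$ so as to extract a fixed positive $\eta$ uniformly in $n$, and to ensure that the selected frequent hypothesis has small loss rather than just large mass.

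An alternative route I would keep in reserve, in case the reduction's parameter bookkeeping becomes unwieldy, is a direct Fano-type lower bound: if $\Ldim(\mch) = \infty$, then by \cite{almm} $\mch$ contains arbitrarily long threshold orders, and the induced uniform realizable distributions on such orders give a packing for which any accurate learner must satisfy $I(S, \mca(S)) = \Omega(\log n)$, directly contradicting information stability (and hence PAC-Bayes stability). Either route completes the hub; the reduction is preferable stylistically, as it exhibits the present theorem as a genuine synthesis of the existing body of equivalences rather than a re-derivation of their lower bounds.
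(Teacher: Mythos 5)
Your hub-and-spoke organization matches the paper exactly, and three of your five spokes (1(a), 1(b), 3(a)) appeal to the same sources. The forward direction for the concentration items is also fine: taking a globally stable learner for a Littlestone class and setting the prior to be its data-free marginal yields $\Ex_S[\mathtt{kl}(\mca(S)\,\|\,p)] = I(S,\mca(S))$, which is bounded by a constant depending only on $d$ because that learner has finite support, and this is sublinear; and your observation that $I(S,\mca(S)) = \Ex_S[\mathtt{kl}(\mca(S)\,\|\,\Ex_S[\mca(S)])]$ gives the implication from information stability to PAC-Bayes stability is precisely the one piece of new argument the paper supplies.

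The gap is in your proposed reduction from PAC-Bayes stability (2(a)) to global stability (1(a)). The claim is that if $\Ex_S[\mathtt{kl}(\mca(S)\,\|\,p)]$ is sublinear then ``the most likely hypothesis under $p$'' is outputted with fixed nonzero frequency $\eta$. This does not follow for two separate reasons. First, sublinear is not small: $r(n)$ could be $\sqrt{n}$, so Markov only localizes the posterior to within KL-radius $O(\sqrt{n})$ of $p$ on a constant fraction of samples, which is essentially no constraint at all and certainly not the ``$1 - o(1)$ fraction'' you state. Second, and more fundamentally, even a uniformly small KL to a data-independent prior $p$ does not force the posterior to place constant mass on any fixed hypothesis, because $p$ itself need not have an atom of constant mass. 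For instance, if $\mca(S)$ is a point mass on a sample-dependent hypothesis $h_S$ and $p$ is spread over a slowly growing support, the KL can be $O(\log n)$ (sublinear) while no hypothesis recurs with probability bounded away from zero. So the step ``PAC-Bayes stable $\Rightarrow$ globally stable'' is not available by this argument; making it work would require extra structure (e.g., that the prior concentrate on a finite cover), which is exactly where the hard content lies. The paper sidesteps this entirely: it cites \cite{PNG22} for both directions of $2(b)\Leftrightarrow$ Littlestone, cites \cite{lm20} for $2(a)\Rightarrow$ Littlestone, and only adds the easy implication $2(b)\Rightarrow 2(a)$ via the identity above. Your fallback Fano/packing argument on threshold orders is in fact much closer in spirit to what \cite{lm20} actually does, and is the route you should take if you want a self-contained proof rather than a citation.
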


\begin{proof}
Throughout this proof, when we give references to prior work, we mean: the immediate translation of the referenced theorem to the language of approximations introduced here gives the desired result. 

The equivalence between Items 1(a), 3(a), and finite Littlestone dimension is proved in~\cite{ABLMM}.

The equivalence between Item 1(b) and finite Littlestone dimension is the content of Theorem~\ref{t:stablepec}.

The equivalence of Item 2(b) and finite Littlestone dimension appears in~\cite{PNG22}.
That Item 2(a) implies finite Littlestone dimension follows from~\cite{lm20}.
Thus, to complete the equivalence of Item 2(a) with finite Littlestone dimension,
it suffices to show that Item 2(b) implies Item 2(a). Indeed, this follows from the basic identity
\begin{align*}
 I\bigl(S,A(S)\bigr) = \Ex_{S}\mathtt{kl}\bigl(\mca(S) ~ || ~ \Ex_{S}[A(S)]\bigr). 
\end{align*}
(Recall that $A(S)$ is the posterior distribution over the output hypothesis space given the input sample $S$;
thus, $\Ex_{S}[A(S)]$ is the expected posterior, taken with respect to the input sample $S$.)
Thus, picking the prior as the expected posterior witnesses that $\mch$ satisfies Item 2(a).
\end{proof}

The next discussion proposes a moral for Theorem \ref{t:equivalence}.  

\begin{disc}[Key features of Littlestone learnability] \label{d:key}
The equivalent conditions of Theorem \ref{t:equivalence} provide different manifestations for the thesis that Littlestone classes have learning rules that are sample-independent in the sense that the output is essentially just a function of the population (but beyond this, it does not really depend in an additional way on the actual data it was trained on). We suggest as a common slogan: 
\[ \mbox{The output is distribution-dependent but data-independent.} \]
\end{disc}

\noindent 
Notice that in \ref{d:key} the stability refers to the posterior distribution over output hypotheses, and not just the output hypothesis itself.
Put differently, the output hypothesis is stable when seen as a random variable, given the input sample $S$. 
    That is, the relevant learning rule here is a randomized learner, and hence given an input sample $S$
    it induces a distribution over output hypotheses (which is instantiated once the randomness of the algorithm is specified). It is this distribution that is required to be stable: in differential stability we require local-stability (replacing one example in the input sample changes this distribution only a bit), 
    and in the PAC-Bayes and information stability we require that for a typical sample, this distribution is close to a data-independent distribution over hypotheses (which is sometimes called the prior).

A simple example may clarify this important distinction between 
distributions over outputs, and outputs. In plain language, 
an initial objection to Discussion \ref{d:key} might go as follows: 
Isn't PAC learnability itself a form of stability, in the sense that the 
output hypotheses are all likely to be within an $\epsilon$-ball of each other, 
and so likely to be equal at many points in the domain? If so, why do 
VC classes not fall under \ref{d:key}?  To see the difference \ref{d:key} is pointing 
out, consider the following.

\begin{expl} \label{e:thresholds} Consider the example where 
$X = \mathbb{Q}$ or $X = \mathbb{R}$ and $\mch$ is the class thresholds (i.e., cuts) over $X$. This is a VC class but not a Littlestone class.  
Consider a simple deterministic algorithm $\mca_0$ that PAC learns $\mch$
whose operation on a given finite sample $S$ of $m$ elements is as follows. 
Identify $L(S)$, the rightmost element in the sample labeled ``$0$,'' and $R(S)$, the 
leftmost element in the sample labeled ``$1$,'' and output the hypothesis corresponding to the threshold at $(L(S)+R(S))/2$.  
Clearly $\mca_0$ is sensitive to the specific sample, since different values of~$L(S)$ and 
$R(S)$ may lead to different outputs. However, as the initial objection points out, 
the fact that $A_0$ is a PAC learner means we still expect that the outputted hypotheses agree on many $x \in X$. 
\end{expl}

To see why \tcb{finite} Littlestone offers a greater stability guarantee, notice that since $\mca_0$ from 
Example \ref{e:thresholds} is deterministic, the distribution over outputs induced by it is a Dirac distribution: 
it is supported on the single output hypothesis $\mca_0(S)$ and gives it probability one. 
Now observe that if $\mca$ receives two samples $S, S^\prime$ in which $L(S) + R(S) \neq L(S^\prime) + R(S^\prime)$, then even though $\mca_0(S)$ and $\mca_0(S^\prime)$ may be close  as hypotheses, $\mca(S)$ and $\mca(S^\prime)$ will be completely different as distributions/random variables (since they are probability distributions which assign probability 1 to different things).  By contrast, Theorem \ref{t:equivalence} and Discussion \ref{d:key} are pointing out that in Littlestone classes, the stability achieved is at the level of the output distributions (and this is characteristic of \tcb{finite} Littlestone as the theorem describes).

The stability observed in VC classes (or PAC learnability) can be summarized as the assertion that when a particular behavior is learnable, any algorithm that learns it will produce hypotheses that exhibit similar behavior. On the other hand, the stability demonstrated in Littlestone classes (or private PAC learnability) takes it a step further by asserting that the learning algorithm itself, which maps training sets to generated hypotheses, is stable. 

\begin{disc}
The proof of \ref{t:equivalence} proceeds largely via the equivalence of each itemized condition to $\mch$ being a Littlestone class.  It could certainly be interesting to improve our understanding of the various arrows directly. 
\end{disc}

\br
\vspace{5mm}

\section{The Algorithmic Unstable Formula Theorem} \label{s:revisited} 

\setcounter{theoremcounter}{0}

In this section, we draw on 
recent work and on the above to lay out a full analogue of Shelah's Unstable Formula Theorem (see the Appendix) in the learning setting, 
Theorem \ref{t:unstable}, with algorithmic 
arguments taking the place of the infinite. 
Note that although it is of course equivalent, we state the conditions equivalent to 
stability, not instability as in the original theorem.  
It isn't necessary to already know the Unstable Formula Theorem to understand Theorem \ref{t:unstable}, but there is a resonance 
in both directions which is thought-provoking.  So we will first discuss some problems that had to be solved to give an algorithmic version, in order to emphasize what may be surprising. 

Although this section may seem natural in hindsight, as the paper's title suggests, we feel it is a central creative contribution of the paper. It took us a long time to see that it could be done, and done in a way we found convincing. 

The items in the unstable formula theorem may 
for present purposes be grouped into four kinds. In model theoretic language, 
there are conditions on counting types, combinatorial conditions on formulas, statements about 
bounds on rank, and about definability of types (we shall explain the algorithmic analogues in due course). 

We begin with the first group of items concerning conditions on counting types, 
explain their infinite nature, and the challenge of expressing finite, combinatorial analogues. (It also has a place of honor in our own work: we first made the connection of counting types to algorithms in \cite{agnostic} and it is what led us to wonder if an Algorithmic Unstable Formula theorem might exist.) 
In combinatorial language, the Sauer-Shelah-Perles (SSP) lemma asserts that given a class $\mch$ with VC dimension $d < \infty$ 
and a finite set $\{ x_1, \dots, x_n \} \subseteq X$, the number of intersection patterns $h \cap \{ x_1, \dots, x_n \}$ realized by 
sets $h \in \mch$ is polynomially bounded by $\binom{n}{\leq d}$. It turns out that this useful inequality lacks the expressivity to 
detect finite Littlestone dimension. For example, consider the families 
\[ \mch_1 = \bigl\{ \{ x \} : x \in \mathbb{Q} \bigr\} \mbox{ and } \mch_2 = \bigl\{ \{ x \in \mathbb{Q} : x \leq t \} : t \in \mathbb{R} \bigr\}. \]
These are two families of sets of rational numbers with VC dimension one that satisfy the SSP lemma with equality, that is, 
for every finite $\{ x_1, \dots, x_n \}$ both families realize exactly $\binom{n}{\leq 1} = n+1$ intersection patterns. However 
$\mch_1$ is Littlestone whereas $\mch_2$ is not. 

In model theoretic language, traditionally, it was not possible to ``see'' stability by counting types over finite sets 
and this created a certain blindness when doing stability in the finite. In model-theoretic language, letting $\vp$ be the graph edge relation, compare the number of 
$\vp$-types over a set of size $n$ (as $n$ grows) in an infinite matching (corresponding to $\mch_1$), versus an infinite half-graph (corresponding to~$\mch_2$). Since any finite linear order is discrete, these two counts differ by at most one, although the half-graph is unstable whereas the matching is not. 

However, in the model theoretic case there is an additional insight, namely, the unstable formula theorem says that when counting $\vp$-types over infinite sets, quantifying over all models of the theory, there is a characteristic difference.  Indeed, returning to the combinatorial example, consider an 
infinite rational interval~$I_{a,b} = [a,b]_{\mathbb{Q}} = \{ x \in \mathbb{Q} : a \leq x \leq b \}$, with $a<b$. Observe that  
$\mch_1$ realizes only a countable number of intersection patterns over $I_{a,b}$ (corresponding to all \emph{rational} numbers between 
$a$ and $b$), whereas $\mch_2$ realizes continuum many intersection patterns (corresponding to all \emph{real} numbers between 
$a$ and $b$). Such infinite counting arguments are used to characterize stability in model theory. 

It turns out that it is possible to characterize Littlestone classes by counting \emph{algorithms} over finite sets.  The following may be 
called an online, dynamic, or adversarial SSP lemma.  

\begin{defn} \label{d:ossp}
Say that $\mch$ satisfies the \emph{online Sauer-Shelah-Perles lemma}
when for every $n \in \mathbb{N}$ there exists  a collection of $\binom{n}{\leq d}$ algorithms or dynamic sets $\mca:X^\star\to\{0,1\}$ 
that cover all the $\mch$-realizable sequences of length $n$; that is, for every sequence $\{(x_i,y_i)\}_{i=1}^n$ that is realizable by $\mch$
there is $j\leq \binom{n}{\leq d}$ such that $A_j(x_1,\ldots, x_i) = y_i$.
\end{defn}
Shalev-Shwartz, Pal, and Ben-David proved that every Littlestone class satisfies  Definition~\ref{d:ossp} with $d$ being the Littlestone dimension (see Lemma 12 in~\cite{bdps}); the converse direction that classes with unbounded Littlestone dimension do not satisfy Definition~\ref{d:ossp} also follows implicitly by~\cite{bdps}, and is noted explicitly e.g.\ in Lemma~9.3 in~\cite{adversarial}. 
The understanding that lemmas of this kind can be seen as  
analogues of counting types has its origins in \cite{agnostic}. \tcb{As explained in the discussion 
around \cite{agnostic} Theorem 5.1, the functions in Definition \ref{d:ossp} are essentially SOAs which make at most $d$ mistakes.}

\br
Next we turn to the problem of sampling.  
The most important property of VC classes is arguably uniform convergence (a.k.a $\eps$-approximations). This property asserts that any VC class $\mch$ satisfies a uniform law of large numbers in the sense that a sufficiently large i.i.d random sample from any distribution over the domain $X$ is likely to represent the measures of all sets in $\mch$ simultaneously and uniformly. From a technical perspective uniform convergence is naturally paired with the Sauer-Shelah-Perles counting lemma. In fact, the optimal rate of uniform convergence is quantitatively characterized by the minimal polynomial degree for which the SSP lemma is satisfied.  
It is therefore very natural to ask whether the ability to detect Littlestone classes via an adapted counting lemma also gives rise to 
an ability to detect Littlestone classes via an adapted law of large numbers.

\begin{defn} \label{d:olln} 
Say that $\mch$ has online $\epsilon$-approximations, or that $\mch$ satisfies the Adversarial Law of Large Numbers, when the following holds. Consider a sequence $S=x_1,\ldots, x_N$  which is presented sequentially to a sampler that retains a uniformly random subsequence of length $n\leq N$. Then the following is satisfied even if the sequence is adaptively produced by an adversary which sees, after presenting each element $x_i$, whether $x_i$ was retained. With probability at least $1-\delta$, every $h\in \mch$ satisfies
\[\bigl\lvert\mu_S(h) - \mu_{\hat S}(h)\bigr\rvert\leq 
\eps_{\mch}(\delta,n)%\longrightarrow_{n\to\infty} 0 
%O\Bigl(\sqrt{\frac{\log(1/\delta)}{n}}\Bigr)
,\]
where $\mu_S(h) = \frac{1}{N}\sum_{i=1}^N 1[x_i\in h]$, and $\mu_{\hat S}(h) = \frac{1}{n}\sum_{j=1}^n 1[x_{{i_j}}\in h]$.
Above, $\eps_{\mch}(\delta,n)$ is a sequence satisfying $\lim_{n\to\infty}\eps_{\mch}(\delta,n) = 0$ for every fixed $\delta>0$. 
% the big oh notation conceals a positive constant which depends only on the class $\mch$.\footnote{A more detailed and asymptotically tight bound is $O(\sqrt{\frac{d + \log(1/\delta)}{n})}$, where $d$ is the Littlestone dimension, and this time the big oh notation conceals a universal numerical constant.}
\end{defn}
It was recently shown by \cite{adversarial} that Definition~\ref{d:olln} is satisfied by $\mch$ exactly when it has a finite Littlestone dimension $d$, in which case the optimal rate $\eps_\mch$ satisfies 
\[\eps_\mch(\delta,n)=O\Bigl(\sqrt{\frac{d + \log(1/\delta)}{n}\Bigr)},\]
which is the same function of $d,\delta$ %like
as in the classical uniform convergence theorem for VC classes. 

In both Definitions \ref{d:ossp} and \ref{d:olln}, the difference between VC classes and Littlestone classes arises in the online or adversarial aspect: the adversary is able to change the sequence of examples given to the sampler in real time in the Littlestone case, compared to the presentation of an iid sample in the VC case.  Although this language seems far from model theoretic, %observe that  
in comparing the changes in combinatorial structure they detect, the adversarial aspect in the finite has certain parallels to quantification over all possibilities 
in the monster model in model theory. 

Next we turn to combinatorial properties.  These correspond to items (3) and (4) 
in the unstable formula theorem, from which it follows that one is finite if and only if the other is finite. The first direct bounds in the finite case, 
which remain best to date, are due to Hodges, see \cite{hodges} 6.7.8-9: in our language, if $\mch$ has $k$-half graphs then it has $d$-trees for $d$ about $\log(k)$, and if $\mch$ has $d$-trees it has $k$-half graphs for $k$ about $\log(d)$.\footnote{A proof in the language of concept classes was included in the appendix of \cite{almm}.} 

These two items were, to our knowledge, the first items in the unstable formula theorem to be applied to understand finite graphs, in \cite{MiSh:978}. 

\begin{defn} \label{d:hg}
$\mch$ has finite half-graphs if for some $k < \omega$  
there do not exist distinct $x_1, \dots, x_k \in X$, $h_1, \dots, h_k \in \mch$ such that $x_i \in h_j$ if and only if $i < j$.\footnote{This is sometimes phrased as  ``$\mch$ has finite Threshold dimension.''} We may say $\mch$ is $k$-stable to emphasize the value of $k$. 
\end{defn}

\begin{defn} \label{d:tree}
Say that $\mch$ has finite trees if for some $t < \omega$ 
there do not exist distinct $\langle a_\eta : \eta \in {^ {t\geq} 2} \rangle$ from $X$ and distinct
$\langle h_\rho : \rho \in {^{t+1} 2} \rangle$ from $\mch$, so that for every such $h_\rho$ and $a_\eta$, if 
$\eta^\smallfrown \langle 0 \rangle \tlf \rho$ then $a_\eta \notin h_\rho$ and if $\eta^\smallfrown \langle 1 \rangle \tlf \rho$ then $a_\eta \in h_\rho$. 
\end{defn}

Definition \ref{d:tree} is %equivalent to
{one more than} a bound on the height of a mistake-tree (from \S \ref{s:littlestone} above) and we will use the two versions interchangeably, and refer to the minimal 
such $t$-1, if finite, as $\Ldim(\mch)$.  It is useful that these items connect an evidently self-dual property 
\ref{d:hg} with a notion of dimension \ref{d:tree} which is not obviously self-dual (see discussion in \cite{agnostic}). 

Next we turn to notions of rank.  In light of \cite{MiSh:978}, the relevant principle in the algorithmic 
case seems to be the derived notion of majority.  We mentioned that the two combinatorial properties above were used to understand the 
structure of finite stable graphs in \cite{MiSh:978}, towards proving the stable regularity lemma and stable Ramsey theorems. 
In that paper, the theory of stability was in some sense ``miniaturized'' to find aspects analogous to stability theory in the finite. 
The following, stated there for graphs, was a key definition (for more on its model theoretic antecedents and on  
stable regularity, see \cite{MiSh:E98}). 

\begin{defn} 
Say that the finite set $A \subseteq X$ is $\epsilon$-good if 
for any $h \in \mch$, either 
$\{ a \in A : h(a) = 0 \}$ or $\{ a \in A : h(a) = 1 \}$ has size $< \epsilon |A|$. 
Say that $\mch$ has linear-sized $\epsilon$-good sets if there is $c = c(\epsilon)$ such that any finite 
$Y \subseteq X$ has a subset $A$ of size $\geq |Y|^c$ which is $\epsilon$-good. 
\end{defn}

There is an important extension of this definition which is naturally stated for graphs, and for concept classes only with slightly more 
work, so we do not include it in the equivalence below but record it here. Let $a\sim x$ mean an edge exists. 
In a finite graph $G$, given $A \subseteq G$, if $|\{ a \in A : x\sim a \}| < \epsilon|A|$, write \tcb{ $\trv(x,A) = 0$}, 
and if $|\{ a \in A : x \not\sim a\}| < \epsilon|A|$, write \tcb{ $\trv(x,A) = 1$}, for the 
truth value or majority opinion. The following definition, also from \cite{MiSh:978}, says that most 
elements in $B$ have the same majority opinion when faced with any good set. 

\begin{defn} In a graph $G$, 
say the finite set $B \subseteq G$ is $\epsilon$-excellent if for any finite $A \subseteq G$ which is $\epsilon$-good, there is 
$\trv = \trv(B,A) \in \{ 0 , 1 \}$ so that for all but $\epsilon|B|$ elements $b \in B$, we have that 
$\trv(b, A) = \trv$.
\end{defn}

The statement that any $k$-stable graph has linear-sized $\epsilon$-good sets for any $\epsilon$, and indeed linear-sized 
$\epsilon$-excellent sets for any $\epsilon < \frac{1}{2^{2^k}}$ or so, was proved in 
\cite{MiSh:978}. 
We note that good and excellent sets have since played key roles in lines of work related to versions of 
stability in arithmetic regularity \cite{TW} and graph limits \cite{CM}. 
Existence of linear-sized $\epsilon$-excellent sets for any $\epsilon < \frac{1}{2}$ under stability was proved in \cite{agnostic}. 

Although we have used the word majority, there are at least two distinct candidates for a notion of majority in Littlestone classes: 
the majority arising from counting measure, as in $\epsilon$-good, and the majority arising from Littlestone dimension.  
In \cite{agnostic} it was shown that these two notions of majority in some sense densely often agree, and that Littlestone classes are characterized by admitting a simple axiomatic notion of majority in the sense of \cite{agnostic} Definition 7.7; for reasons of length, we just give a pointer:

\begin{defn} \label{d:large} 
Say that $\mch$ admits an axiomatic notion of largeness if  it satisfies~\cite{agnostic} Definition $7.7$. 
\end{defn}

Finally, we turn to definability. This was the subject of Section \ref{s:frequent} above, 
but we can add one last piece of the puzzle. 
Conditions (8)-(9) in the unstable formula theorem deal with definability of types: the difference is that (8) is about existence 
of definitions, whereas (9) is about existence of definitions which are in some sense canonical or uniform. By analogy, let us 
propose that in Theorem \ref{t:equivalence}, PEC learning plays a role similar to (9) because of its use of the SOA. Indeed, a 
reader familiar with both the definition via $R(x=x,\vp, 2)$-rank which may be used to prove (9) 
and with the SOA will see the analogy is almost exact.

\br

We now arrive to the statement of the theorem. 

\begin{theorem}[The ``Algorithmic'' Unstable Formula Theorem] \label{t:unstable} For $(X, \mch)$ the following are equivalent and equivalent to the statement that 
$\mch$ is a Littlestone class.  
\end{theorem}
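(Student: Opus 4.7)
The plan is to establish the theorem by routing every item through ``$\mch$ has finite Littlestone dimension'' as a common hub. Because Theorem \ref{t:unstable} is intended as a consolidation of a large body of recent work (plus the earlier sections of this paper), the proof will primarily amount to invoking the right existing equivalences, checking that the definitions stated in \S \ref{s:revisited} agree literally with the hypotheses of the cited results, and then closing the cycle.

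First I would dispatch the counting-types and sampling analogues. For the online Sauer--Shelah--Perles property (Definition \ref{d:ossp}), Lemma 12 of \cite{bdps} gives the forward direction (any class of Littlestone dimension $d$ is covered by $\binom{n}{\leq d}$ dynamic algorithms), and Lemma 9.3 of \cite{adversarial} gives the converse (if $\Ldim(\mch)=\infty$ then no such polynomial cover exists). For the adversarial law of large numbers (Definition \ref{d:olln}), I would cite \cite{adversarial} directly, which not only proves the equivalence but supplies the rate $\eps_\mch(\delta,n)=O\bigl(\sqrt{(d+\log(1/\delta))/n}\bigr)$.

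Next I would handle the combinatorial and rank-style items. Definition \ref{d:tree} is, up to the off-by-one in indexing, literally the statement $\Ldim(\mch)<\infty$, and its equivalence to Definition \ref{d:hg} (finite half-graphs/threshold dimension) follows in both directions from the finite bounds of Hodges \cite{hodges} 6.7.8--9, with concept-class-level statements in the appendix of \cite{almm}: stability of height $\approx \log k$ implies $k$-stability and vice versa. For the rank/majority items, existence of linear-sized $\eps$-good sets for every $\eps$, and of $\eps$-excellent sets for small enough $\eps$, was proved for $k$-stable classes in \cite{MiSh:978}, extended through $\eps<1/2$ in \cite{agnostic}; conversely, arbitrarily large half-graphs obstruct $\eps$-goodness of any positive-measure subset, which rules these properties out in non-Littlestone classes. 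The axiomatic largeness item (Definition \ref{d:large}) is the content of \cite{agnostic} Definition 7.7 and its surrounding theorems, which show it is equivalent to finite Littlestone dimension.

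Finally, the definability/learning-theoretic items (global stability, differential privacy, PAC-Bayes stability, information stability, and PEC learning in its stable form) are already packaged by Theorem \ref{t:equivalence}, which in turn draws on \cite{ABLMM}, \cite{BLM}, \cite{PNG22}, \cite{lm20}, together with Theorem \ref{t:stablepec} of this paper for the PEC entry; the analogy with Shelah's items~(8)--(9) on definability of types is exactly the role played by PEC learning via the SOA, as discussed just before the theorem statement. The main obstacle is not a technical one but a curatorial one: each cited source states its result under slightly different conventions (rates, choice of parameters, style of adversary, which axioms of ``majority'' are packaged together), and the care of the proof lies in verifying that each prior equivalence is with \emph{finite} Littlestone dimension in the exact sense used here, so that the cycle closes without loss. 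Once this bookkeeping is complete, no additional argument is needed.
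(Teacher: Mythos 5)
Your proposal follows the same hub-and-spoke strategy as the paper: everything is routed through ``finite Littlestone dimension'' via citation of the already-established equivalences, and your references for (A1), (A2), (B1)--(B2), (C1), (C2), and (D1) match the paper's. Two items of the theorem, however, are silently skipped in your writeup and need to be addressed to close the cycle: (A3), online learnability, which the paper handles by citing the main theorem of \cite{bdps} (Littlestone dimension characterizes online learnability); and (B3), finite Littlestone dimension of the dual class $(\mch,X)$, which the paper derives by observing that the half-graph/threshold property of (B1) is manifestly self-dual, so (B1) for $(X,\mch)$ gives (B1) for $(\mch,X)$, hence (B2) for the dual by the Hodges bounds you already invoke (with the caveat that the numerical dimension may change). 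Once these two are added, no further argument is needed and the proof matches the paper's.
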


\noindent \textbf{A. Counting and sampling} 

\begin{enumerate}
\item[A1.] 
{\it $\mch$ satisfies the online Sauer-Shelah-Perles lemma in the sense of \ref{d:ossp}.}

\item[A2.] {\it $\mch$ has online $\epsilon$-approximations %and adversarial laws of large numbers 
in the sense of Definition \ref{d:olln}.}  

\item[A3.] %{\bf Online learnability.}
{\it $\mch$ is online learnable.} 

\end{enumerate}

\noindent \textbf{B. Combinatorial parameters}

\begin{enumerate}
\item[B1.] {\it $(X, \mch)$ has finite half-graphs.} 

\item[B2.] {\it $(X, \mch)$ has finite Littlestone dimension.} 

\item[B3.] {\it The dual class $(\mch, X)$ has finite Littlestone dimension.}
\end{enumerate}

\noindent \textbf{C. Majorities} 
\begin{enumerate}
\item[C1.] {\it Existence of linear-sized $\epsilon$-good subsets.}

\item[C2.] {\it $\mch$ admits an axiomatic notion of largeness.} \end{enumerate}

\noindent \textbf{D. Frequent approximations}

\begin{enumerate}
\item[D1.] {\it $\mch$ satisfies the equivalent conditions of Theorem 
\ref{t:equivalence} above: repetition, concentration, local stability, and indeed, admitting eventually stable approximations by a \emph{canonical} algorithm, the SOA.}

\end{enumerate}

\begin{proof}  
The equivalence of (A1) and finite Littlestone dimension is established in~\cite{adversarial,bdps}: \cite{bdps} explicitly show that Littlestone classes satisfy (A1), but the converse is implicit in their work. The equivalence is stated in full in~\cite{adversarial}, Lemma 9.3.
The equivalence of (A2) and finite Littlestone dimension is the main result of~\cite{adversarial}, and the equivalence of (A3) with finite Littlestone is the main result of~\cite{bdps}.

The equivalence of (B1) and (B2) translates so-called Hodges lemma, which gave a finite version of a 
correspondence between orders and trees in the unstable formula theorem ((3) iff (4) on page \pageref{ct-image} below).  A statement and 
proof of (B1) iff (B2) in combinatorial language can be found in \cite{hodges} 6.7.8-9 and a statement and proof in the present language is included in the appendix to \cite{almm}.  The equivalence of (B3) for the dual class follows from the fact that half-graphs are evidently self-dual so 
$(X, \mch)$ has finite Threshold dimension if and only if $(\mch, X)$ has finite Threshold dimension. 
Of course, the dimension may be different. 

(B2) is equal to Littlestone by definition. 

The existence of $\epsilon$-good sets in $k$-stable graphs is due to \cite{MiSh:978}. 
The equivalence of item (C1) and Littlestone was recorded in our language in \cite{agnostic} Claim 2.4. 
The equivalence of item (C2) and Littlestone is 
\cite{agnostic} Theorem 7.11.

(D1) if and only if Littlestone is the content of Theorem \ref{t:equivalence} above, plus the clause about the SOA from Theorem \ref{t:stablepec}.
\end{proof}

\vspace{10mm}

\section{Some notes for future work}

When things are new the path is still being made. 
But as a gesture of appreciation to the reader who has understood so far, let us list several informal questions.   

(1) In Theorem 5.11,  one can consider replacing the assumption ``$\vp$ is a stable formula'' by ``$D$ is a distribution under which $\vp$ is a stable formula'' suitably defined. 

(2) Many of the arrows in the theorems of equivalences we have given above are not direct, and indeed, direct proofs are not currently obvious and might be worth while. 
Say, could the adversarial items in Theorem 7.8(A) and the concentration phenomena in Theorem 6.7  be brought closer together? 

(3) We expect that future work will add further equivalences to some of these theorems, which may enrich the picture. 

(4) Are there better or finer notions of rank visible in the finite which more clearly reflect the variety of model theoretic ranks visible in the infinite? 

(5) Are there classes of algorithms which are, broadly, neither adversarial as in online, nor statistical as in PAC, which can be marshalled to give interesting items in the algorithmic Unstable Formula Theorem?

\newpage

\section*{Appendix: The unstable formula theorem}

Here is the statement of Shelah's unstable formula theorem as it appears in \cite{Sh:a}, Chapter II, Theorem 2.2. 

\vspace{5mm}
\begin{center} \label{ct-image}
\includegraphics[width=110mm]{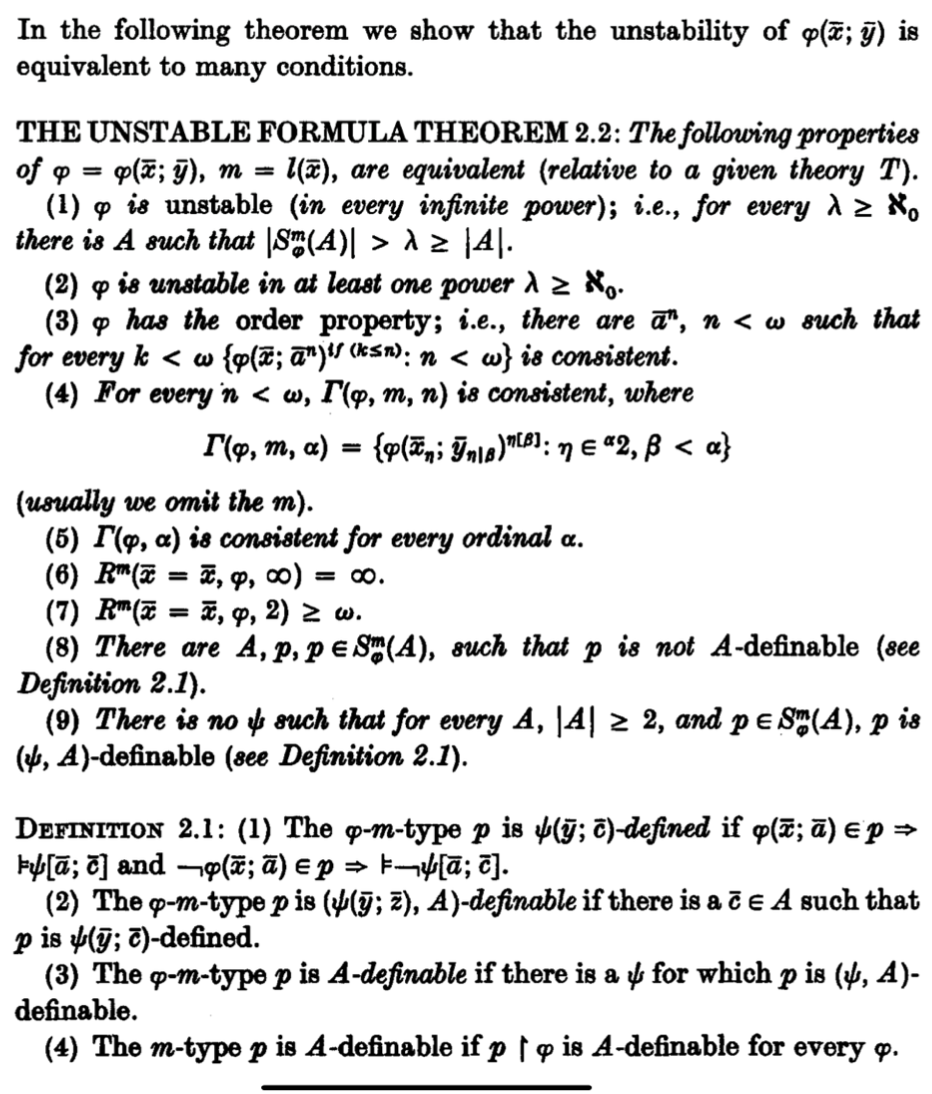}

\end{center}

\newpage

%\vspace{10mm}
\subsection*{Acknowledgments} 
Many people encouraged us after seeing the first \tcb{(2022)} version of this manuscript.  Thank you. 

\tcb{We thank the anonymous referee for a thoughtful reading.} 

MM: Research partially supported by NSF-BSF 2051825. 
Some expository aspects of this work benefitted from presentation at the ``Combinatorics Meets Model Theory'' conference at Cambridge and in graduate course lectures 
at Chicago.

SM: Shay Moran is a Robert J.\ Shillman Fellow; he acknowledges support by ISF grant 1225/20, by BSF grant 2018385, by an Azrieli Faculty Fellowship, by Israel PBC-VATAT, by the Technion Center for Machine Learning and Intelligent Systems (MLIS), and by the the European Union (ERC, GENERALIZATION, 101039692). Views and opinions expressed are however those of the author(s) only and do not necessarily reflect those of the European Union or the European Research Council Executive Agency. Neither the European Union nor the granting authority can be held responsible for them.

%    Bibliographies can be prepared with BibTeX using amsplain,
%    amsalpha, or (for "historical" overviews) natbib style.
\bibliographystyle{amsplain}
%    Insert the bibliography data here.

\end{document}